\newtheorem{thm}{Theorem}[section]
\newtheorem*{thm*}{Theorem}
\newtheorem{lem}[thm]{Lemma}
\newtheorem{cor}[thm]{Corollary}
\newtheorem{prop}[thm]{Proposition}
\theoremstyle{definition} 
\newtheorem{defn}[thm]{Definition}
\newtheorem{ex}[thm]{Example}
\theoremstyle{remark} 
\newtheorem{rem}[thm]{Remark}
\newcommand{\thmref}[1]{Theorem~\textup{\ref{#1}}}
\newcommand{\corref}[1]{Corollary~\textup{\ref{#1}}}
\newcommand{\lemref}[1]{Lemma~\textup{\ref{#1}}}
\newcommand{\propref}[1]{Proposition~\textup{\ref{#1}}}
\numberwithin{equation}{section}
\newcommand{\midtext}[1]{\quad\text{#1}\quad}
\newcommand{\righttext}[1]{\quad\text{#1 }}
\renewcommand{\and}{\midtext{and}}
\newcommand{\C}{\mathbb C}
\newcommand{\T}{\mathbb T}
\newcommand{\CC}{\mathcal C}
\newcommand{\KK}{\mathcal K}
\newcommand{\II}{\mathcal I}
\newcommand{\Chi}{\raisebox{2pt}{\ensuremath{\chi}}}
\renewcommand{\epsilon}{\varepsilon}
\DeclareMathOperator{\prim}{Prim}
\DeclareMathOperator{\ad}{Ad}
\DeclareMathOperator{\ran}{ran}
\DeclareMathOperator{\infl}{Inf}
\DeclareMathOperator*{\spn}{span}
\DeclareMathOperator*{\clspn}{\overline{\spn}}
\newcommand{\id}{\text{\textup{id}}}
\newcommand{\inv}{^{-1}}
\newcommand{\variso}{\overset{\simeq}{\longrightarrow}}
\newcommand{\what}{\widehat}
\newcommand{\wilde}{\widetilde}
\renewcommand{\:}{\colon}
\renewcommand{\subset}{\subseteq}
\newcounter{nogo}
\newcommand{\cs}{\mathbf{C}^*}
\newcommand{\co}{\mathbf{Co}}
\newcommand{\ac}{\mathbf{Ac}}
\newcommand{\eqco}{\delta_G/\co}
\newcommand{\eqcom}{\eqco^m}
\newcommand{\kalg}{\KK/\cs}
\newcommand{\kac}{\KK/\ac}
\newcommand{\dst}{\textup{DSt}}
\DeclareMathOperator{\cpc}{CPC}
\DeclareMathOperator{\cpa}{CPA}
\newcommand{\fix}{\operatorname{Fix}}
\begin{document}
\title[Rigidity for $C^*$-dynamical systems, II]{Rigidity theory for $C^*$-dynamical systems and the ``Pedersen Rigidity Problem'', II}
\author[Kaliszewski]{S.~Kaliszewski}
\address{School of Mathematical and Statistical Sciences
\\Arizona State University
\\Tempe, Arizona 85287}
\email{kaliszewski@asu.edu}

\author[Omland]{Tron Omland}
\address{Department of Mathematics
\\University of Oslo
\\NO-0316 Oslo
\\Norway
\and
Department of Computer Science
\\Oslo Metropolitan University
\\NO-0130 Oslo
\\Norway}
\email{trono@math.uio.no}

\author[Quigg]{John Quigg}
\address{School of Mathematical and Statistical Sciences
\\Arizona State University
\\Tempe, Arizona 85287}
\email{quigg@asu.edu}

\date{March 29, 2019}

\thanks{The second author is funded by the Research Council of Norway through FRINATEK, project no.~240913.}

\subjclass[2010]{Primary 46L55}
\keywords{action, crossed-product, exterior equivalence, outer conjugacy, generalized fixed-point algebra}

\begin{abstract}
This is a follow-up to a paper with the same title and by the same authors.
In that paper,
all groups were assumed to be abelian,
and we are now aiming to generalize the results to nonabelian groups.

The motivating point is Pedersen's theorem, which does hold for an arbitrary locally compact group $G$,
saying that two actions $(A,\alpha)$ and $(B,\beta)$ of $G$ are outer conjugate if and only if
the dual coactions $(A\rtimes_{\alpha}G,\what\alpha)$ and $(B\rtimes_{\beta}G,\what\beta)$ of $G$ are conjugate
via an isomorphism that maps the image of $A$ onto the image of $B$ (inside the multiplier algebras of the respective crossed products).

We do not know of any examples of a pair of non-outer-conjugate actions such that their dual coactions are conjugate,
and our interest is therefore exploring the necessity of latter condition involving the images;
and we have decided to use the term ``Pedersen rigid'' for cases where this condition is indeed redundant.

There is also a related problem,
concerning the possibility of a so-called equivariant coaction having a unique generalized fixed-point algebra,
that we call ``fixed-point rigidity''.
In particular, if the dual coaction of an action is fixed-point rigid, then the action itself is Pedersen rigid,
and no example of non-fixed-point-rigid coaction is known.
\end{abstract}

\maketitle

\section{Introduction}\label{intro}

Let $G$ be a locally compact group.
Given an action $\alpha$ of $G$ on a $C^*$-algebra $A$, we can form the crossed product $C^*$-algebra $A\rtimes_{\alpha}G$,
and some obvious questions to ask are:
How much does the crossed product remember of the action?
What extra information do we need in order to recover the action from the crossed product?
And what do we mean by recover, that is, what are the various types of equivalences with respect to which we can we expect to recover the action?
In general, if we only know that two crossed products are isomorphic, we cannot say much about how the corresponding actions are related.
Moreover, we think of ``rigidity'' of an action as its ability to be recovered.

Crossed-product duality refers to the problem of determining when a $C^*$-algebra is a crossed product (up to some equivalence),
and then to recover the action from the crossed product together with the dual coaction, and sometimes other data.

The first result in this direction is Imai-Takai-Takesaki duality, giving an isomorphism between $A \rtimes_\alpha G \rtimes_{\widehat{\alpha}} G$ and $A \otimes \KK(L^2(G))$,
taking the double dual action $\widehat{\widehat{\alpha}}$ to $\alpha\otimes\ad\rho$ (where $\rho$ denotes the right regular representation),
that is, recovers the action of a locally compact group up to tensoring with the compact operators.

Characterizing which $C^*$-algebras are isomorphic to a crossed product by $G$, and recovery of the action up to conjugacy was first studied by Landstad
for reduced crossed products, then for full crossed products, and later categorical versions were obtained 
(by the first and third authors).

In \cite{koqlandstad}, we study what we called outer duality, but which would be better called ``Pedersen duality'', lying in some sense between the duality theories of Takai and Landstad.
The crucial result in this regard is Pedersen's theorem,
which says that two actions $(A,\alpha)$ and $(B,\beta)$ are outer conjugate if and only if there exists an isomorphism $\Phi\colon A\rtimes_\alpha G \to B\rtimes_\beta G$ such that
\begin{gather}
\text{$\Phi$ is $\widehat{\alpha}-\widehat{\beta}$ equivariant}\label{equivariant}\\
\Phi(i_A(A))=i_B(B).\label{subalgebra}
\end{gather}
The heart of the matter is whether condition~\eqref{subalgebra} is redundant in the above result,
giving rise to the ``Pedersen rigidity problem'':
Do there exist non-outer-conjugate actions $(A,\alpha)$ and $(B,\beta)$ and an isomorphism $\Phi\colon A\rtimes_\alpha G \to B\rtimes_\beta G$ satisfying \eqref{equivariant}?

Motivated by this question, we call an action $(A,\alpha)$ \emph{Pedersen rigid} if for every other action $(B,\beta)$,
if the dual coactions $(A\rtimes_\alpha G,\widehat\alpha)$ and $(B\rtimes_\beta G,\widehat\beta)$ are conjugate,
then $(A,\alpha)$ and $(B,\beta)$ are outer conjugate. If $G$ is discrete, then every action is Pedersen rigid.
However, even when $G$ is abelian and non-discrete, the problem seems delicate.

Let $(C,\delta)$ be a coaction and $V\colon C^*(G)\to M(C)$ an equivariant homomorphism.
A related question is whether the generalized fixed-point algebra of $(C,\delta,V)$ \emph{only} depends on $C$ and $\delta$?
There are currently no examples of $V,W$ such that $C^{\gamma,V}\neq C^{\gamma,W}$.
If $(A,\alpha)$ is an action such that $(A\rtimes_\alpha G,\widehat\alpha)$ has a unique generalized fixed-point algebra, then $(A,\alpha)$ is Pedersen rigid.

Moreover, we say that a class of actions is rigid if
whenever $(A,\alpha)$ and $(B,\beta)$ are any two actions belonging to this class such that $(A\rtimes_\alpha G,\widehat\alpha)$ and $(B\rtimes_\beta G,\widehat\beta)$ are conjugate,
then $(A,\alpha)$ and $(B,\beta)$ are outer conjugate.

In \cite{koqpedersen}, we discussed Pedersen rigidity for actions of \emph{abelian} groups,
and presented several ``no-go theorems'',
that is,
situations where \eqref{subalgebra} is redundant.
For example, we showed 
that for any abelian group, the classes of all actions on commutative or stable $C^*$-algebras are both Pedersen rigid.

The goal of this paper is to generalize all the no-go theorems in \cite{koqpedersen} from abelian groups to arbitrary locally compact groups.
While some of the results in \cite{koqpedersen} carry over fully, in other cases we were only able to prove weakened versions.
For example,
we prove that every action $\alpha$ of $G$ on $A$
is strongly Pedersen rigid when
$G$ is discrete or $\alpha$ is unitary and $A$ is finite-dimensional,
or when $\alpha$ is a direct sum of strongly Pedersen rigid actions.
We also prove that the results for commutative or stable $C^*$-algebras generalize to the nonabelian case,
and that for every compact group the class of ergodic actions with full spectrum is Pedersen rigid.
In fact, the no-go theorems in the commutative or ergodic cases are stronger:
two actions are conjugate if and only if the dual coactions are.
Our proof in the compact ergodic case is significantly easier than the abelian version
(\cite[Proposition~4.8]{koqpedersen}),
due to our use of unitary eigenoperators.
In the abelian case we appealed to the cohomology of 2-cocycles.

Our no-go theorem for local rigidity
(see \thmref{no-go local})
required us to prove a new result that might be of independent interest:
$\alpha$-invariant ideals of $A$ are in one-to-one correspondence with ideals of the crossed product that are invariant for the dual coaction.
Gootman and Lazar \cite[Theorem~3.4]{gl} proved this for amenable groups,
which was enough for our abelian no-go theorem
\cite[Proposition~4.10]{koqpedersen}.
Our proof of the correspondence for arbitrary groups depends upon Landstad duality for full crossed products.

However, for one of the no-go theorems, the passing from abelian to nonabelian groups was unsuccessful.
In \cite[Theorem~4.6]{koqpedersen}
we proved that when $G$ is abelian, every 
unitary%
\footnote{A action $\alpha$ of $G$ on $A$ is \emph{unitary} if
$\alpha = \ad u$ for some strictly continuous unitary homomorphism
$u\colon G\to M(A)$.  In  \cite{koqpedersen} we used the the term ``inner'' for such
actions.}
 action of~$G$ is strongly Pedersen rigid.
The nonabelian case 
(see \corref{no-go unitary})
places a severe restriction on~$A$: it must be finite-dimensional.
This is presumably due to our method of proof --- we suspect that unitary actions are strongly Pedersen rigid in general.

Moreover, the question of whether all actions on finite-dimensional $C^*$-algebras are Pedersen rigid is also still open.

\section{Preliminaries}\label{prelim}

Throughout, $G$ will be a fixed locally compact group.
If $A$ is a $C^*$-algebra, we write $(A,\alpha)$ for an action
of $G$
and $(A,\delta)$ for a coaction of $G$.
If $A,B$ are $C^*$-algebras and
$\phi\:A\to M(B)$ is a nondegenerate homomorphism,
we use the same notation $\phi$ for the canonical extension to a unital strictly continuous homomorphism $M(A)\to M(B)$.

If $(A,\alpha)$ is an action, an \emph{$\alpha$-cocycle} is a strictly continuous unitary map $U\:G\to M(A)$ such that $U_{st}=U_s\alpha_s(U_t)$ for all $s,t\in G$.
For any $\alpha$-cocycle $U$, the composition
$\beta=\ad U\circ\alpha$ is also an action on $A$,
which is said to be \emph{exterior equivalent} to $\alpha$.
Two actions $(A,\alpha)$ and $(B,\beta)$ are \emph{outer conjugate} if $\beta$ is conjugate to an action on $A$ that is exterior equivalent to $\alpha$.

A \emph{coaction} of $G$ on $A$ is a nondegenerate faithful homomorphism $\delta\:A\to M(A\otimes C^*(G))$ such that
$(\delta\otimes\id)\circ\delta=(\id\otimes\delta_G)\circ\delta$
and
$\clspn\{\delta(A)(1\otimes C^*(G))\}=A\otimes C^*(G)$,
where
$\delta_G\:C^*(G)\to M(C^*(G)\otimes C^*(G))$ is the homomorphism determined on group elements by
$\delta_G(s)=s\otimes s$.
In particular, a coaction $\delta$ maps $A$ into
\begin{multline*}
\wilde M(A\otimes C^*(G))
=\{m\in M(A\otimes C^*(G)):
\\
m(1\otimes C^*(G))\cup (1\otimes C^*(G))m\subset A\otimes C^*(G)\}.
\end{multline*}
A coaction $(A,\delta)$ is \emph{maximal} if the canonical surjection
\[
A\rtimes_\delta G\rtimes_{\what\delta} G
\to A\otimes \KK
\]
is an isomorphism,
where we write $\KK$ to mean
the $C^*$-algebra of compact operators
$\KK(L^2(G))$.
If $(A,\alpha)$ is an action, then the dual coaction $(A\rtimes_\alpha G,\what\alpha)$ is maximal.

If $(A,\delta)$ is a coaction and $s\in G$, the \emph{$s$-spectral subspace} is
\[
A_s=\{a\in A:\delta(a)=a\otimes s\}.
\]
More generally, by nondegeneracy the coaction extends uniquely to a homomorphism, still denoted by $\delta$, from $M(A)$ to $M(A\otimes C^*(G))$, and we have spectral subspaces for these too:
\[
M(A)_s=\{m\in M(A):\delta(m)=m\otimes s\}.
\]
However, in general the extended map $\delta\:M(A)\to M(A\otimes C^*(G))$ is not a coaction,
because we may have $\delta(M(A))\not\subset \wilde M(M(A)\otimes C^*(G))$.
The \emph{fixed-point algebra} of $A$ under $\delta$ is
\[
A^\delta=A_e=\{a\in A:\delta(a)=a\otimes 1\},
\]
where $e$ is the identity element of $G$.

If $(A,\alpha)$ and $(B,\beta)$ are actions,
then a homomorphism $\phi\:A\to B$ is \emph{$\alpha-\beta$ equivariant} if
$\phi\circ\alpha_s=\beta_s\circ\phi$ for each $s\in G$.
On the other hand, if $(A,\delta)$ and $(B,\epsilon)$ are coactions,
then a homomorphism $\phi\:A\to B$ is \emph{$\delta-\epsilon$ equivariant} if
the following diagram commutes:
\[
\xymatrix{
A \ar[r]^-\delta \ar[d]_\phi
&\wilde M(A\otimes C^*(G)) \ar[d]^{\phi\otimes\id}
\\
B \ar[r]_-\epsilon
&\wilde M(B\otimes C^*(G)).
}
\]
Note that
the properties of the ``tilde multiplier algebras'' such as
$\wilde M(A\otimes C^*(G))$ guarantee that
the right-hand vertical homomorphism $\phi\otimes\id$ is well-defined, even though $\phi\:A\to B$ may be degenerate
(see, for example,
\cite[discussion following Definition~3.2]{klqfunctor}).

If $(A,\delta)$ is a coaction, an ideal $I$ of $A$ is \emph{strongly $\delta$-invariant} if
\[
\clspn\{\delta(I)(1_{M(A)}\otimes C^*(G))\}=I\otimes C^*(G),
\]
in which case $\delta$ restricts to a coaction $\delta_I$ on $I$,
which is maximal if $\delta$ is.
Moreover, the inclusion map $\iota\:I\hookrightarrow A$ is $\delta_I-\delta$ equivariant, and the crossed product $\iota\rtimes G$ maps
$I\rtimes_{\delta_I} G$
faithfully onto an ideal of $A\rtimes_\delta G$.
We identify $I\rtimes_{\delta_I} G$ with this ideal.
Finally, $\delta$ descends to a coaction $\delta^I$ on $A/I$,
which is maximal if $\delta$ is,
and
this gives a short exact sequence
\[
\xymatrix{
0 \ar[r]
&I\rtimes_{\delta_I} G \ar[r]
&A\rtimes_\delta G \ar[r]
&(A/I)\rtimes_{\delta^I} G \ar[r]
&0
}
\]
that is equivariant for the dual actions, by
\cite[Theorem~2.3]{nil:full}.

An \emph{equivariant coaction} is a triple $(A,\delta,V)$,
where $(A,\delta)$ is a coaction
and $V\:C^*(G)\to M(A)$ is a $\delta_G-\delta$ equivariant nondegenerate homomorphism.
The \emph{generalized fixed-point algebra} associated to an equivariant 
coaction $(A,\delta,V)$ is the set $A^{\delta,V}$ of all $m\in M(A)$ satisfying \emph{Landstad's conditions}
\begin{enumerate}
\item $\delta(m)=m\otimes 1$;
\item $mV(f),V(f)m\in A$ for all $f\in C_c(G)$;
\item $s\mapsto \ad V_s(m)$ is norm continuous.
\end{enumerate}
Note that (1) says that $m\in M(A)_e$,
and (3) says that $\ad V$ is an action on $A^{\delta,V}$.
If $(A,\delta,V)$ is an equivariant maximal coaction (so $\delta$ is maximal),
and if we let $B=A^{\delta,V}$ and $\alpha=\ad V\:G\curvearrowright B$,
Landstad duality for full crossed products \cite[Theorem~3.2]{kqfulllandstad}
says that there is an isomorphism
\[
(A^{\delta,V}\rtimes_\alpha G,\what\alpha)\variso (A,\delta)
\]
taking $i_G$ to $V$ and $i_B$ to the inclusion $B\hookrightarrow M(A)$.

A \emph{$\KK$-algebra}
is a pair $(A,\iota)$,
where $A$ is a $C^*$-algebra
and $\iota\:\KK\to M(A)$ is a nondegenerate homomorphism.
The \emph{relative commutant} of a $\KK$-algebra $(A,\iota)$ is the $C^*$-algebra
\[
C(A,\iota)=\{m\in M(A):m\iota(k)=\iota(k)m\text{ for all }k\in\KK\}.
\]
The \emph{canonical isomorphism} $\theta_A\:C(A,\iota)\otimes \KK\variso A$ is determined on elementary tensors by
$\theta_A(a\otimes k)=a\iota(k)$.

By \cite[Lemma~3.8]{klqfunctor2}, if $(A,\iota)$ and $(B,\jmath)$ are $\KK$-algebras and $\phi\:A\to B$ is a homomorphism such that
\[
\phi(a\iota(k))=\phi(a)\jmath(k)\righttext{for all}a\in A,k\in\KK,
\]
then there is a unique homomorphism $C(\phi)\:C(A,\iota)\to C(B,\jmath)$ such that
\[
C(\phi)(a\iota(k))=\phi(a)\jmath(k)\righttext{for all}a\in C(A,\iota),k\in\KK.
\]
Again, the subtlety is that, even though $\phi$ might be degenerate, we are extending part of the way into $M(A)$.
To belabor the point: we cannot express the condition on $\phi$ in the form $\phi\circ\iota=\jmath$, because we do not require the homomorphism $\phi\:A\to B$ to be nondegenerate, and consequently we have no right to expect that it will extend to a homomorphism $M(A)\to M(B)$.

A \emph{$\KK$-action} is a triple $(A,\alpha,\iota)$,
where $(A,\alpha)$ is an action
and $(A,\iota)$ is a $\KK$-algebra
such that $\alpha$ is trivial on $\iota(\KK)$.
In this case $\alpha$ restricts to an action
$C(\alpha)$ on $C(A,\iota)$.

We adapt a few concepts from \cite{koqpedersen} from abelian to arbitrary locally compact groups $G$.

\begin{defn}
A maximal coaction $(A,\delta)$ of $G$ is \emph{strongly fixed-point rigid} if it has a unique generalized fixed-point algebra,
i.e., for any two $G$-equivariant strictly continuous unitary homomorphisms $V,W\:G\to M(A)$ we have
\[
A^{\delta,V}=A^{\delta,W}.
\]
An action of $G$ is \emph{strongly Pedersen rigid} if its dual coaction is strongly fixed-point rigid.
\end{defn}

\begin{defn}
A maximal coaction $(A,\delta)$ of $G$ is \emph{fixed-point rigid} if
the automorphism group of $(A,\delta)$ acts transitively on the set of generalized fixed-point algebras,
i.e., for any two $\what\delta$-equivariant strictly continuous unitary homomorphisms $V,W\:G\to M(A)$ there is an automorphism $\Theta$ of $(A,\delta)$ such that
\[
\Theta(A^{\delta,V})=A^{\delta,W}.
\]
An action of $G$ is \emph{Pedersen rigid} if its dual coaction is fixed-point rigid.
\end{defn}

The elementary theory of \cite[Section~3]{koqpedersen}
carries
over to the case of nonabelian $G$;
in particular,
strong Pedersen rigidity of
an action $(B,\alpha)$ of $G$ is 
equivalent to the following:
for every action $(C,\beta)$ of $G$,
if $\Theta\:(B\rtimes_{\alpha}G,\what\alpha)\variso (C\rtimes_\beta G,\what\beta)$ is a conjugacy then $\Theta(i_B(B))=i_C(C)$,
and
Pedersen rigidity of
$(B,\alpha)$ of $G$ is 
equivalent to the following:
for every action $(C,\beta)$ of $G$,
$\alpha$ and $\beta$ are outer conjugate if and only if
the dual coactions $(B\rtimes_\alpha G,\what\alpha)$ and $(C\rtimes_\beta G,\what\beta)$ are conjugate.
Moreover
(and this wasn't explicitly mentioned in \cite{koqpedersen})
both strong fixed-point rigidity and fixed--point rigidity are preserved by conjugacy of coactions,
and consequently
both strong Pedersen rigidity and Pedersen rigidity are preserved by outer conjugacy of actions.

\theoremstyle{plain}
\newtheorem*{sfpr*}{Strong Fixed-Point Rigidity Problem}
\begin{sfpr*}
Is every maximal coaction strongly fixed-point rigid?
\end{sfpr*}

Equivalently:
\theoremstyle{plain}
\newtheorem*{spr*}{Strong Pedersen Rigidity Problem}
\begin{spr*}
Is every action strongly Pedersen rigid?
\end{spr*}

\theoremstyle{plain}
\newtheorem*{fpr*}{Fixed-Point Rigidity Problem}
\begin{fpr*}
Is every maximal coaction fixed-point rigid?
\end{fpr*}

Equivalently:
\theoremstyle{plain}
\newtheorem*{pr*}{Pedersen Rigidity Problem}
\begin{pr*}
Is every action Pedersen rigid?
\end{pr*}

In \cite{koqpedersen} we proved a number of no-go theorems,
each giving particular sufficient conditions for a positive answer to 
the Pedersen Rigidity Problem.
Some of these are phrased in terms of the following:

\begin{defn}
A class $\CC$ of actions is \emph{Pedersen rigid} if any two actions $(A,\alpha)$ and $(B,\beta)$ in $\CC$ are outer conjugate if and only if the dual coactions
$(A\rtimes_\alpha G,\what\alpha)$ and $(B\rtimes_\beta G,\what\beta)$ are conjugate.

\end{defn}

\section{Discrete groups}\label{discrete}

\begin{thm}\label{no-go discrete}
If $G$ is discrete, then every action of $G$ is strongly Pedersen rigid.
\end{thm}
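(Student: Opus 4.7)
The plan is to prove a stronger and more concrete statement: for discrete $G$ and any action $(A,\alpha)$, every $\what\alpha$-equivariant strictly continuous unitary homomorphism $V\:G\to M(A\rtimes_\alpha G)$ yields the same generalized fixed-point algebra, namely
\[
(A\rtimes_\alpha G)^{\what\alpha,V}=i_A(A).
\]
Since this is independent of $V$, strong fixed-point rigidity of $\what\alpha$ follows at once, and hence $\alpha$ is strongly Pedersen rigid by definition.

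For the inclusion $(A\rtimes_\alpha G)^{\what\alpha,V}\subset i_A(A)$, I would use the fact that discreteness of $G$ makes the point-mass $\chi_{\{e\}}$ an element of $C_c(G)$, and then $V(\chi_{\{e\}})=V_e=1_{M(A\rtimes_\alpha G)}$. Thus Landstad's condition~(2), applied to $f=\chi_{\{e\}}$, collapses to $m\in A\rtimes_\alpha G$. Combined with condition~(1), which says $m\in M(A\rtimes_\alpha G)_e$, I get
\[
m\in (A\rtimes_\alpha G)\cap M(A\rtimes_\alpha G)_e=(A\rtimes_\alpha G)_e,
\]
and for discrete $G$ this fixed-point subspace is exactly $i_A(A)$ (as one sees from the decomposition $A\rtimes_\alpha G=\clspn\{i_A(a)i_G(s)\}$ and the formula $\what\alpha(i_A(a)i_G(s))=i_A(a)i_G(s)\otimes s$).

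For the reverse inclusion $i_A(A)\subset (A\rtimes_\alpha G)^{\what\alpha,V}$, I would just verify Landstad's three conditions for $m=i_A(a)$: condition~(1) holds because $i_A(A)=(A\rtimes_\alpha G)_e\subset M(A\rtimes_\alpha G)_e$; condition~(2) holds because $i_A(a)\in A\rtimes_\alpha G$ and $V(f)\in M(A\rtimes_\alpha G)$, so the products $i_A(a)V(f)$ and $V(f)i_A(a)$ land in $A\rtimes_\alpha G$; and condition~(3) is automatic since every map out of a discrete group is continuous. There is no serious obstacle in this argument; the whole proof is really an observation that discreteness kills every issue at once, because $\chi_{\{e\}}$ is a bona fide element of $C_c(G)$.
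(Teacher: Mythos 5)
Your proposal is correct and follows essentially the same route as the paper: the paper's Lemma~\ref{discrete fixed} shows $A^{\delta,V}=A^\delta$ for any maximal coaction of a discrete group by exactly your mechanism ($m=mV(1_{C^*(G)})\in A$ since $\chi_{\{e\}}\in C_c(G)$, with conditions (1)--(3) trivially verified in reverse), and the theorem then identifies this common fixed-point algebra with $i_A(A)$. Your explicit verification that $(A\rtimes_\alpha G)_e=i_A(A)$ is a detail the paper leaves implicit, but the argument is the same.
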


\begin{proof}
Let $(A,\alpha)$ be an action.
Since $G$ is discrete,
$i_A(A)$ is the fixed-point algebra of $\what\alpha$
(see \lemref{discrete fixed} below),
and this must coincide with all general fixed-point algebras.
\end{proof}

The following lemma is presumably folklore, but since we could not find a reference we include proof.
\begin{lem}\label{discrete fixed}
Let $(A,\delta)$ be a maximal coaction of a discrete group $G$.
Then for every equivariant homomorphism $V\:G\to M(A)$ we have
$A^{\delta,V}=A^\delta$,
where as usual
\[
A^\delta=A_e=\{a\in A:\delta(a)=a\otimes 1\}.
\]
\end{lem}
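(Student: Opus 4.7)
My plan is to prove both inclusions $A^\delta \subseteq A^{\delta,V}$ and $A^{\delta,V} \subseteq A^\delta$ directly by checking Landstad's conditions, exploiting two features of a discrete group $G$: (a) $C^*(G)$ is unital, with identity given by the characteristic function $\chi_{\{e\}} \in C_c(G)$; and (b) every function on $G$ is automatically norm continuous. Neither step looks like a serious obstacle; the whole point is that in the discrete case the Landstad conditions collapse.

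For the inclusion $A^\delta \subseteq A^{\delta,V}$, I would take $a \in A^\delta$ and verify the three Landstad conditions. Condition (1) is just the definition of $A^\delta$. Condition (2) holds because $a\in A$ and $V(f)\in M(A)$, so $aV(f),V(f)a\in A$ automatically. Condition (3) is free of charge: since $G$ is discrete, any function $G \to M(A)$ (in particular $s\mapsto \ad V_s(a)$) is norm continuous. So $a\in A^{\delta,V}$.

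For the reverse inclusion, let $m\in A^{\delta,V}\subseteq M(A)$. The crucial point is that $\chi_{\{e\}}\in C_c(G)$ is the multiplicative identity of $C^*(G)$ (for discrete $G$), so nondegeneracy of $V\:C^*(G)\to M(A)$ forces $V(\chi_{\{e\}}) = 1_{M(A)}$. Applying Landstad's condition (2) with $f=\chi_{\{e\}}$ then gives $m = m\,V(\chi_{\{e\}})\in A$. Combined with condition (1), namely $\delta(m)=m\otimes 1$, this places $m$ in $A^\delta$. This establishes $A^{\delta,V}\subseteq A^\delta$ and completes the proof. Note that maximality of $\delta$ is not actually needed for the lemma as stated, though of course it is used in the theorem that invokes it.
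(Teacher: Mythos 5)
Your proof is correct and follows essentially the same route as the paper's: the forward inclusion is the observation that Landstad's conditions (2) and (3) are automatic for $a\in A^\delta$ when $G$ is discrete, and the reverse inclusion uses $m=mV(1_{C^*(G)})\in A$ exactly as you do with $V(\chi_{\{e\}})$. Your closing remark that maximality is not needed for the lemma itself is also accurate.
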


\begin{proof}
Let $V$ be an equivariant homomorphism.
First note that $A^{\delta,V}\subset A$ because if $m\in A^{\delta,V}$ then
\[
m=mV(1_{C^*(G)})\in A.
\]
On the other hand, if $a\in A^\delta$ then $a\in M(A)_e$,
for every $c\in C^*(G)$ we have $ac,ca\in A$ because $c\in M(A)$,
and $s\mapsto \ad V_s(a)$ is trivially norm continuous by discreteness of $G$.
Therefore $A^\delta\subset A^{\delta,V}$.
\end{proof}

\section{Stable $C^*$-algebras}\label{stable}

\begin{thm}\label{no-go stable}
The class of actions on stable $C^*$-algebras possessing strictly positive elements is Pedersen rigid.
\end{thm}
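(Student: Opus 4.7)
The forward direction is Pedersen's theorem itself: outer conjugate actions yield a conjugacy of dual coactions that even carries $i_A(A)$ onto $i_B(B)$, in particular a conjugacy. For the converse, suppose $(A,\alpha)$ and $(B,\beta)$ are actions on stable $C^*$-algebras with strictly positive elements and $\Phi\:(A\rtimes_\alpha G,\what\alpha)\variso(B\rtimes_\beta G,\what\beta)$ is a conjugacy. My plan is to take one more crossed product, apply Imai-Takai-Takesaki duality, and then strip the extra copy of $\KK$ by a stability trick.

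Concretely, I would first form $\Phi\rtimes G$, an isomorphism of the double crossed products that is equivariant for the double dual actions $\what{\what\alpha}$ and $\what{\what\beta}$. Because $\what\alpha$ and $\what\beta$ are maximal, Imai-Takai-Takesaki duality identifies
\[
(A\rtimes_\alpha G\rtimes_{\what\alpha}G,\what{\what\alpha})\cong(A\otimes\KK,\alpha\otimes\ad\rho),
\]
and similarly for $B$, producing an equivariant isomorphism $(A\otimes\KK,\alpha\otimes\ad\rho)\variso(B\otimes\KK,\beta\otimes\ad\rho)$. Next, since $\rho\:G\to M(\KK)$ is a strictly continuous unitary homomorphism, the map $s\mapsto 1_{M(A)}\otimes\rho_s$ is an $(\alpha\otimes\id_\KK)$-cocycle implementing an exterior equivalence between $\alpha\otimes\id_\KK$ and $\alpha\otimes\ad\rho$, and likewise for $B$. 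This reduces the problem to showing that $(A,\alpha)$ and $(B,\beta)$ are outer conjugate, given that the stabilized pairs $(A\otimes\KK,\alpha\otimes\id_\KK)$ and $(B\otimes\KK,\beta\otimes\id_\KK)$ are outer conjugate.

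The last step is the stability trick, which carries the weight of the hypothesis: for a stable $C^*$-algebra $A$ with a strictly positive element, a Brown-type stabilization $A\otimes\KK\variso A$ can be chosen so as to transport $\alpha\otimes\id_\KK$ to an action exterior equivalent to $\alpha$. The key ingredient is a sequence of isometries $\{v_i\}\subset M(A)$ with $\sum_i v_iv_i^*=1$ strictly, which exists precisely because $A$ is stable with a strictly positive element; compatibility of the resulting stabilization with $\alpha$ is then measured by an $\alpha$-cocycle built from $\{\alpha_s(v_i)\}$. Applying this on both sides and chaining the outer conjugacies yields $(A,\alpha)$ outer conjugate to $(B,\beta)$.

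I expect the only substantive difficulty to be this final step: while the stability trick is folklore, the precise package we need -- a strictly continuous $\alpha$-cocycle $u\:G\to M(A)$ witnessing the exterior equivalence between $\alpha$ and the pulled-back $\alpha\otimes\id_\KK$ -- uses the strictly positive element hypothesis essentially (to produce the isometries with the strict-sum property), and one must verify that the cocycle one writes down inherits strict continuity from that of $\alpha$ on the $v_i$. The other steps are essentially bookkeeping with Takai duality and the cocycle $1\otimes\rho$.
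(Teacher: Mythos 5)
Your argument is correct, and its skeleton coincides with the paper's: the paper's proof is a one-line deferral to \cite[Proposition~4.2]{koqpedersen}, whose argument is precisely your first two steps (double crossed product, Imai--Takai--Takesaki duality to get $(A\otimes\KK,\alpha\otimes\ad\rho)\simeq(B\otimes\KK,\beta\otimes\ad\rho)$, and the cocycle $1\otimes\rho$ to pass to $\alpha\otimes\id$), followed by a citation of Combes \cite[Section~8 Proposition]{com} --- the equivariant Brown--Green--Rieffel theorem that Morita equivalent actions on stable $C^*$-algebras with strictly positive elements are outer conjugate. Your genuine departure is to replace that citation by an explicit equivariant stabilization, and your sketch does close up: choosing isometries $v_i\in M(A)$ with $\sum_iv_iv_i^*=1$ strictly, the map $\theta(a\otimes e_{ij})=v_iav_j^*$ is an isomorphism $A\otimes\KK\variso A$ carrying $\alpha_s\otimes\id$ to $\ad u_s\circ\alpha_s$ for the unitary $\alpha$-cocycle $u_s=\sum_iv_i\alpha_s(v_i)^*$, and the strict continuity you flag follows from the tail identity $\bigl\|\sum_{i>n}v_i\alpha_s(v_i)^*a\bigr\|=\bigl\|\bigl(1-\sum_{i\le n}v_iv_i^*\bigr)\alpha_{s\inv}(a)\bigr\|$, which tends to $0$ uniformly for $s$ in a compact set $K$ because $\{\alpha_{s\inv}(a):s\in K\}$ is norm compact. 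One correction of attribution: the isometries exist for every stable $A$ (transport $1\otimes s_i$ through an isomorphism $A\cong A\otimes\KK$), so the strictly positive element is not what produces them; it is the hypothesis that Combes' Morita-theoretic argument requires. What each route buys: the paper's is two lines long; yours yields an explicit cocycle and, as far as I can see, never actually invokes $\sigma$-unitality, so if written out carefully it would prove the formally stronger statement that the class of all actions on stable $C^*$-algebras is Pedersen rigid.
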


\begin{proof}
The proof of \cite[Proposition~4.2]{koqpedersen} carries over verbatim, since the quoted result \cite[Section~8 Proposition]{com} has no restriction on the group.
\end{proof}

\begin{rem}
There is an error in the discussion following \cite[Proposition~4.2]{koqpedersen}, where we said that we won't
``find any examples of multiple generalized fixed-point algebras unless at least one of $A$ and $B$ is nonstable''.
This seems to be making an assertion about strong fixed-point rigidity,
whereas the proposition only concerns fixed-point rigidity (when phrased in terms of coactions).
The discussion should be changed to something along the following lines:
delete the first sentence ``Thus (assuming,\dots)'', since it is not obvious how to rephrase it in a useful way in terms of Pedersen rigidity,
then in the second sentence change
``phenomenon of multiple generalized fixed-point algebras''
to
``phenomenon of conjugate dual coactions of non-outer conjugate actions''.
\end{rem}

\section{Commutative $C^*$-algebras}\label{commutative}

\begin{thm}\label{no-go commutative}
Actions on commutative $C^*$-algebras are conjugate if and only if the dual coactions are conjugate.
In particular, the class of 
such
actions 
is Pedersen rigid.
\end{thm}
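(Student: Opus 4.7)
The ``only if'' direction is immediate from functoriality of the crossed-product construction. For the substantive ``if'' direction, the plan is to pass to the double crossed product and invoke Imai--Takai--Takesaki duality, then recover the commutative action from the spectrum.

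Suppose we are given a conjugacy $\Phi\colon(A\rtimes_\alpha G,\widehat\alpha)\iso(B\rtimes_\beta G,\widehat\beta)$. Applying the crossed-product functor for coactions produces a $G$-equivariant isomorphism
\[
\Phi\rtimes G\colon\bigl((A\rtimes_\alpha G)\rtimes_{\widehat\alpha}G,\widehat{\widehat\alpha}\bigr)\iso\bigl((B\rtimes_\beta G)\rtimes_{\widehat\beta}G,\widehat{\widehat\beta}\bigr).
\]
Since dual coactions are maximal, Imai--Takai--Takesaki duality identifies each side $G$-equivariantly with $(A\otimes\KK,\alpha\otimes\ad\rho)$ and $(B\otimes\KK,\beta\otimes\ad\rho)$ respectively, yielding a $G$-equivariant isomorphism
\[
\Psi\colon(A\otimes\KK,\alpha\otimes\ad\rho)\iso(B\otimes\KK,\beta\otimes\ad\rho).
\]

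Now exploit commutativity. Writing $A=C_0(X)$ and $B=C_0(Y)$, the spectrum of $A\otimes\KK$ is canonically $X$ and that of $B\otimes\KK$ is $Y$. The $G$-action induced on $\widehat{A\otimes\KK}=X$ by $\alpha\otimes\ad\rho$ is precisely the original $G$-action on $X$: indeed $\ad\rho_s$ is an inner automorphism of $\KK$ and hence acts trivially on $\widehat\KK$, so only the $\alpha_s$-factor moves points. The same applies to $B\otimes\KK$. Consequently $\Psi$ induces a $G$-equivariant homeomorphism $X\iso Y$, which dualizes back to the desired conjugacy $(A,\alpha)\iso(B,\beta)$.

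The only nontrivial bookkeeping is verifying that the $G$-action induced on $\widehat{A\otimes\KK}$ is exactly $\alpha$; this reduces to the pointwise computation that conjugating the irreducible representation $\pi_x\otimes\id$ by $(\alpha_s\otimes\ad\rho_s)^{-1}$ yields a representation unitarily equivalent to $\pi_{s\cdot x}\otimes\id$, confirming that the $\KK$-factor is invisible on the spectrum.
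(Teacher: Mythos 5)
Your argument is correct and is essentially the same as the one the paper invokes (the proof of \cite[Proposition~4.3]{koqpedersen}, which the paper cites verbatim): pass to the double crossed product, apply Imai--Takai--Takesaki duality to obtain a $G$-equivariant isomorphism $(A\otimes\KK,\alpha\otimes\ad\rho)\cong(B\otimes\KK,\beta\otimes\ad\rho)$, and recover the commutative algebras and their actions from the induced $G$-homeomorphism of spectra, using that $\ad\rho$ acts trivially on $\prim\KK$.
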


\begin{proof}
The proof of \cite[Proposition~4.3]{koqpedersen} carries over verbatim,
since it did not use the standing hypothesis from that paper that $G$ be abelian.
\end{proof}

\section{Compact groups}\label{compact sec}

In
\cite[theorem~8]{lancompact},
Landstad proves that if $G$ is a compact group and $(A,G,\alpha)$ is an ergodic action
with \emph{full spectrum} (meaning that every $\pi\in \what G$ occurs in $\alpha$ with multiplicity $\dim \pi$),
then there is a \emph{unitary eigenoperator}
$U\in M(A\otimes \KK(L^2(G)))$,
i.e.,
\begin{equation}\label{eigen}
(\alpha_s\otimes\id)(U)=U(1\otimes \rho_s)\righttext{for all}s\in G.
\end{equation}
here $\rho$ is the right regular representation of $G$.
From now on we will write $\KK=\KK(L^2(G))$.

\begin{prop}\label{id}
If $G$ is compact and $(A,G,\alpha)$ is an ergodic action with full spectrum, then
\[
(A\otimes \KK,\alpha\otimes\ad\rho)\simeq (A\otimes \KK,\alpha\otimes\id).
\]
\end{prop}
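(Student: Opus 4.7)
The plan is to use the unitary eigenoperator $U \in M(A \otimes \KK)$ provided by Landstad's theorem and to show that conjugation by $U$ implements the desired conjugacy between the two actions on $A \otimes \KK$.

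First I would set $\Phi := \ad U$. Since $U$ is unitary in $M(A \otimes \KK)$, the map $\Phi$ is an automorphism of the $C^*$-algebra $A \otimes \KK$. The only remaining task is to check that $\Phi$ intertwines the two actions. The natural guess is that $\Phi$ carries $\alpha \otimes \ad \rho$ to $\alpha \otimes \id$, and this is what I would verify.

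The verification is a direct calculation using the eigenoperator relation \eqref{eigen}. For $x \in A \otimes \KK$ and $s \in G$, the multiplier extension of $\alpha_s \otimes \id$ is multiplicative and strictly continuous, so
\[
(\alpha_s \otimes \id)(UxU^*) = (\alpha_s \otimes \id)(U)\,(\alpha_s \otimes \id)(x)\,(\alpha_s \otimes \id)(U)^*.
\]
Substituting $(\alpha_s \otimes \id)(U) = U(1 \otimes \rho_s)$ and its adjoint yields
\[
(\alpha_s \otimes \id)(UxU^*) = U(1 \otimes \rho_s)(\alpha_s \otimes \id)(x)(1 \otimes \rho_s^*)U^* = U\,(\alpha_s \otimes \ad \rho_s)(x)\,U^*,
\]
so $(\alpha_s \otimes \id) \circ \Phi = \Phi \circ (\alpha_s \otimes \ad \rho_s)$ for every $s$, which is precisely the required equivariance.

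There is no serious obstacle here; the whole point of invoking Landstad's eigenoperator is that it packages the needed information into a single unitary in $M(A \otimes \KK)$, and the rest is a one-line manipulation. The only minor care is in handling the extension of $\alpha_s \otimes \id$ to multipliers, but this is routine and standard.
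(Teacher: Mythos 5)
Your proposal is correct and is essentially identical to the paper's own proof: both conjugate by the Landstad eigenoperator $U$ and verify the intertwining relation $(\alpha_s\otimes\id)\circ\ad U=\ad U\circ(\alpha_s\otimes\ad\rho_s)$ by the same one-line computation using \eqref{eigen}, just read in the opposite direction.
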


\begin{proof}
Let $U$ be a unitary eigenoperator as in \eqref{eigen}.
Then for all $y\in A\otimes \KK$,
\begin{align*}
\ad U\circ (\alpha_s\otimes\ad\rho_s)(y)
&=U(\id\otimes \ad\rho_s)\bigl((\alpha_s\otimes\id)(y)\bigr)U^*
\\&=U(1\otimes\rho_s)(\alpha_s\otimes\id)(y)(1\otimes\rho_s^*)U^*
\\&=(\alpha_s\otimes\id)(U)(\alpha_s\otimes\id)(y)\bigl(U(1\otimes\rho_s)\bigr)^*
\\&=(\alpha_s\otimes\id)(Uy)\bigl((\alpha_s\otimes\id)(U)\bigr)^*
\\&=(\alpha_s\otimes\id)(UyU^*
\\&=(\alpha_s\otimes\id)\circ\ad U(y).
\qedhere
\end{align*}
\end{proof}

\begin{lem}\label{fixed}
If $G$ is compact and $(A,G,\alpha)$ is an ergodic action with full spectrum, then
\[
(A\otimes\KK)^{\alpha\otimes\id}=1_A\otimes \KK.
\]
\end{lem}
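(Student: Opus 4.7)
My plan is to prove the equality by combining the averaging idea with ergodicity. The inclusion $1_A \otimes \KK \subset (A \otimes \KK)^{\alpha \otimes \id}$ is immediate since $\alpha \otimes \id$ acts trivially on the second factor. For the reverse inclusion I will use a conditional expectation coming from Haar integration.

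Define $E \colon A \to A$ by $E(a) = \int_G \alpha_s(a)\, ds$, using normalized Haar measure on the compact group $G$. Translation invariance of Haar measure gives $\alpha_t \circ E = E$, so $E$ maps into $A^\alpha$, and by ergodicity $A^\alpha = \C 1_A$. Thus $E$ is a (faithful, unital) conditional expectation from $A$ onto $\C 1_A$. The slice map $E \otimes \id$ is defined on elementary tensors by $(E \otimes \id)(a \otimes k) = E(a) \otimes k$ and, being the composition of a completely positive contraction with the identity, extends continuously to a map $A \otimes \KK \to 1_A \otimes \KK$.

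The key observation is that for any $x \in A \otimes \KK$ one has
\[
(E \otimes \id)(x) \;=\; \int_G (\alpha_s \otimes \id)(x)\, ds,
\]
which follows because both sides are continuous linear maps of $x$ that agree on elementary tensors (and the integrand is norm-continuous since $\alpha \otimes \id$ is a strongly continuous action on $A \otimes \KK$). If $x \in (A \otimes \KK)^{\alpha \otimes \id}$, the integrand is the constant function $x$, so the integral equals $x$; hence $x = (E \otimes \id)(x) \in 1_A \otimes \KK$.

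There is no real obstacle here: the argument reduces to the standard fact that averaging over a compact group is a conditional expectation onto the fixed-point algebra, combined with the fact that this behaves well under tensoring with the identity on $\KK$. The only point requiring a line of justification is the identity displayed above, and that is routine. Notably, this proof does not use the full-spectrum hypothesis nor the existence of a unitary eigenoperator; it only needs ergodicity and compactness of $G$, which makes it a clean preparatory lemma for the intended application via \propref{id}.
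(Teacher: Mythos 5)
Your proof is correct and follows essentially the same route as the paper's: averaging over $G$ with normalized Haar measure, using ergodicity to see that the average of an elementary tensor lands in $1_A\otimes\KK$, and then extending by linearity, density, and continuity (the paper phrases the expectation via the unique invariant state $\omega$, with $E(a)=\omega(a)1_A$, but this is the same map). Your observation that only compactness and ergodicity are used, not full spectrum, is also accurate.
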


\begin{proof}
Let $\omega$ be the unique $G$-invariant state on $A$, so that for all $a\in A$,
\[
\omega(a)1_A=\int_G\alpha_s(a)\,ds.
\]
Then for all $a\in A$, $T\in \KK$,
\begin{align*}
\int_G(\alpha_s\otimes\id)(a\otimes T)\,ds
&=\int_G \alpha_s(a)\,ds\otimes T
\\&=\omega(a)1_A\otimes T
\\&=\omega(a)(1_A\otimes T).
\end{align*}
Thus
\[
1_A\otimes\KK\subset (A\otimes\KK)^{\alpha\otimes\id},
\]
and on the other hand, by linearity, density, and continuity,
\[
(A\otimes\KK)^{\alpha\otimes\id}\subset 1_A\otimes\KK.
\]
\end{proof}

\begin{thm}\label{rigid}
Let $G$ be compact, and let $(A,\alpha)$ and $(B,\beta)$ be ergodic actions of $G$ with full spectrum. If $\what\alpha\simeq \what\beta$, then $\alpha\simeq \beta$.
In particular, the class of ergodic actions of $G$ with full spectrum is Pedersen rigid.
\end{thm}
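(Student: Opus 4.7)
The plan is to push the hypothesis $\what\alpha\simeq\what\beta$ through Imai-Takai-Takesaki duality and \propref{id} to reduce to systems whose action on the $\KK$-factor is trivial, and then to strip off the stabilization via a relative commutant.

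First I would apply Imai-Takai-Takesaki duality to $\what\alpha\simeq\what\beta$, obtaining an equivariant isomorphism $(A\otimes\KK,\alpha\otimes\ad\rho)\simeq(B\otimes\KK,\beta\otimes\ad\rho)$, and then I would invoke \propref{id} on each side to produce an equivariant isomorphism
\[
\phi\colon (A\otimes\KK,\alpha\otimes\id)\variso (B\otimes\KK,\beta\otimes\id).
\]
Since any equivariant isomorphism carries fixed-point algebras to fixed-point algebras, \lemref{fixed} immediately gives $\phi(1_A\otimes\KK)=1_B\otimes\KK$. (Note that ergodicity of a compact-group action forces $A$ and $B$ to be unital, so $1_A$ and $1_B$ really do live inside $A$ and $B$, and the relevant fixed-point algebras really sit inside $A\otimes\KK$ and $B\otimes\KK$.)

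To finish I would extend $\phi$ to an isomorphism $\bar\phi\colon M(A\otimes\KK)\variso M(B\otimes\KK)$ and restrict to the relative commutants of the two copies of $\KK$. For unital $A$, the relative commutant of $1_A\otimes\KK$ in $M(A\otimes\KK)$ equals $A\otimes 1\cong A$, and similarly for $B$; since $\bar\phi$ sends $1_A\otimes\KK$ onto $1_B\otimes\KK$, it restricts to a $*$-isomorphism $A\to B$, which is $\alpha$-$\beta$ equivariant because $\alpha\otimes\id$ acts on $A\otimes 1$ as $\alpha$ and likewise on the other side. Hence $\alpha\simeq\beta$, and since conjugacy implies outer conjugacy the class is Pedersen rigid. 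The step I expect to be the main obstacle is this last one: verifying that $\bar\phi$ carries the relative commutant of $1_A\otimes\KK$ onto that of $1_B\otimes\KK$, which ultimately rests on the standard identification $C(A\otimes\KK,\iota_A)=A\otimes 1$ for unital $A$ and on the fact that $\bar\phi$ preserves the subalgebra $\iota_A(\KK)$ up to the identification provided by the previous step.
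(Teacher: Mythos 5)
Your proposal is correct and follows essentially the same route as the paper's proof: double-dual plus Imai--Takai--Takesaki duality, then \propref{id} to trivialize the action on the $\KK$-factor, then \lemref{fixed} to see that the isomorphism matches up the copies of $1\otimes\KK$, and finally destabilization via relative commutants. The only cosmetic difference is that the paper outsources the last step to a cited destabilization theorem, whereas you argue it directly using unitality of $A$ (which ergodicity does guarantee); both are fine.
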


\begin{proof}
Since
\[
(A\rtimes_\alpha G,\what\alpha)\simeq (B\rtimes_\beta G,\what\beta),
\]
we have
\[
(A\rtimes_\alpha G\rtimes_{\what\alpha} G,\what{\what\alpha})\simeq
(B\rtimes_\beta G\rtimes_{\what\beta} G,\what{\what\beta}),
\]
so by crossed-product duality
\[
(A\otimes\KK,\alpha\otimes\ad\rho)
\simeq (B\otimes\KK,\beta\otimes\ad\rho),
\]
and hence by \propref{id} we have an isomorphism
\[
\theta\:(A\otimes\KK,\alpha\otimes\id)
\simeq (B\otimes\KK,\beta\otimes\id).
\]
Then by \lemref{fixed},
\[
\theta(1_A\otimes\KK)
=\theta\bigl((A\otimes\KK)^{\alpha\otimes\id}\bigr)
=(B\otimes\KK)^{\beta\otimes\id}
=(1_B\otimes\KK).
\]
Thus
\[
\theta\:A\otimes\KK\variso B\otimes\KK
\]
is a $\KK$-isomorphism,
so by \cite[Theorem~4.4]{koqstable}
$\theta$ preserves the relative commutants:
\[
\theta(A\otimes 1_{B(L^2(G))})=B\otimes 1_{B(L^2(G))}.
\]
Thus $\theta$ induces an equivariant isomorphism
\[
\theta_0\:(A,\alpha)\variso (B,\beta).
\qedhere
\]
\end{proof}

\section{Categories and functors}

In preparation for our no-go theorem on local rigidity
(\thmref{no-go local}),
we recall some categorical machinery from \cite{koqmaximal}.
The category $\cs$ has $C^*$-algebras as objects,
and the morphisms are just the usual homomorphisms between $C^*$-algebras (not into multiplier algebras).
The category $\ac$ of actions has actions $(A,\alpha)$ as objects,
and 
a morphism $\phi\:(A,\alpha)\to (B,\beta)$ is an $\alpha-\beta$ equivariant homomorphism $\phi\:A\to B$. Note that we are not allowing $\phi$ to take values in the multiplier algebra $M(B)$, since this would make it inconvenient to handle ideals.
Warning: in earlier papers we used the same notation for categories in which the morphisms were nondegenerate homomorphisms into multiplier algebras; the appropriate choice depends upon the context.

The category $\co$ of coactions has coactions $(A,\delta)$ as objects,
and a morphism $\phi\:(A,\delta)\to (B,\epsilon)$ is a $\delta-\epsilon$ equivariant homomorphism $\phi\:A\to B$.
The full subcategory of maximal coactions is denoted by $\co^m$.

The category $\eqco$ of equivariant actions has equivariant coactions $(A,\delta,V)$ as objects,
and a morphism $\phi\:(A,\delta,V)\to (B,\epsilon,W)$ is a morphism $\phi\:(A,\delta)\to (B,\epsilon)$ such that $\phi\circ V=W$.
The full subcategory of maximal equivariant coactions,
where the coactions are required to be maximal,
is denoted by $\eqco^m$.

The category $\kalg$ of $\KK$-algebras has $\KK$-algebras as objects,
and a morphism $\phi\:(A,\iota)\to (B,\jmath)$
is a homomorphism $\phi\:A\to B$ such that
\[
\phi(a\iota(k))=\phi(a)\jmath(k)\righttext{for all}a\in A,k\in\KK.
\]

The category $\kac$ of $\KK$-actions has $\KK$-actions as objects,
and a morphism $\phi\:(A,\alpha,\iota)\to (B,\beta,\jmath)$
is a morphism $\phi\:(A,\alpha)\to (B,\beta)$ in $\ac$
such that
$\phi\:(A,\iota)\to (B,\jmath)$ is a morphism in $\kalg$.

The \emph{destabilization functor} $\dst\:\kac\to \ac$ is given by
\begin{align*}
\dst(A,\alpha,\iota)&=\bigl(C(A,\iota),C(\alpha)\bigr)\\
\dst(\phi)&=C(\phi).
\end{align*}

The categorial Landstad duality theorem
for actions
\cite[Theorem~5.1]{clda}
(see also \cite[Theorem~2.2]{koqlandstad})
can be formulated as follows:
the functor
\[
\cpa\:\ac\to \eqcom
\]
defined by
\begin{align*}
\cpa(A,\alpha)&= (A\rtimes_\alpha G,\what\alpha,i_G)\\
\cpa(\phi)&= \phi\rtimes G
\end{align*}
is an equivalence.

Given an equivariant coaction $(A,\delta,V)$,
the homomorphism
\[
u^A:=j_A\circ V\:G\to M(A\rtimes_\delta G)
\]
is a $\what\delta$-cocycle,
and we write the perturbed action on $A\rtimes_\delta G$ as
\[
\wilde\delta:=u^A\circ \what\delta.
\]
The functor $\cpc\:\delta_G/\co^m\to \kac$
is given on objects by
\begin{align*}
\cpc(A,\delta,V)&=(A\rtimes_\delta G,\wilde\delta,V\rtimes G),
\end{align*}
and if $\phi\:(A,\delta,V)\to (B,\epsilon,W)$ is a morphism then
\[
\cpc(\phi)\:(A\rtimes_\delta G,\wilde\delta,V\rtimes G)
\to (B\rtimes_\epsilon D,\wilde\epsilon,W\rtimes G)
\]
is the morphism in $\kac$ given by
$\cpc(\phi)=\phi\rtimes G$.

The quasi-inverse functor $\fix$ is determined by the commutative diagram
\[
\xymatrix{
\eqcom \ar[r]^-{\cpc} \ar[dr]_{\fix}
&\kac \ar[d]^{\dst}
\\
&\ac
}
\]
Given an equivariant maximal coaction $(A,\delta,V)$, we write
\begin{align*}
\fix A&=C(A\rtimes_\delta G,V\rtimes G)\\
\fix\delta&=C(\wilde\delta),
\end{align*}
so that
\[
\fix(A,\delta,V)=(\fix A,\fix\delta).
\]
If $\phi\:(A,\delta,V)\to (B,\epsilon,W)$ is a morphism in $\eqcom$ then
\[
\fix(\phi)\:(\fix A,\fix\delta)\to (\fix B,\fix\epsilon)
\]
is the morphism in $\ac$ given by
\[
\fix(\phi)=C(\phi\rtimes G).
\]
Categorical Landstad duality for actions can be illustrated by the commutative diagram
\[
\xymatrix{
\ac \ar[r]^-{\cpa} \ar[d]_\simeq
&\eqcom \ar[d]^{\cpc} \ar[dl]^{\fix}
\\
\ac
&\kac \ar[l]^{\dst}
}
\]
of functors.

We will need to know that the functor $\fix$ is exact,
and we prove this in \lemref{exact lem} below.
To be clear: when we refer to a short exact sequence in any of our categories in which the objects are $C^*$-algebras with extra structure
and the morphisms are homomorphisms that preserve the structure,
we mean that we have a sequence of morphisms in the category
such that the homomorphisms give a short exact sequence of $C^*$-algebras.

\begin{lem}\label{exact lem}
The functor $\fix$ is exact.
\end{lem}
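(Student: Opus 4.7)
The plan is to exploit the factorization $\fix = \dst\circ\cpc$ displayed in the commutative triangle just above the lemma and prove separately that each of $\cpc$ and $\dst$ is exact. Throughout, I would take for granted the routine checks that $\KK$-structures and the equivariant homomorphisms $V$ restrict to ideals and descend to quotients via nondegeneracy.

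First, I would handle $\cpc\colon\eqcom\to\kac$. A short exact sequence there has underlying exact sequence of coactions $0\to (I,\delta_I)\to (A,\delta)\to (A/I,\delta^I)\to 0$ with $I$ strongly $\delta$-invariant, along with compatible equivariant homomorphisms $V_I$, $V$, $V^I$. The result of Nilsen \cite[Theorem~2.3]{nil:full}, already quoted in the preliminaries, says that
\[
0 \to I\rtimes_{\delta_I}G \to A\rtimes_\delta G \to (A/I)\rtimes_{\delta^I}G \to 0
\]
is exact and equivariant for the dual actions. Since the cocycle $u^A=j_A\circ V$ defining the perturbed action $\wilde\delta$ behaves naturally with respect to the morphisms of the sequence, and $V\rtimes G$ restricts and descends correctly, this yields the required short exact sequence in $\kac$.

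Second, I would address $\dst\colon\kac\to\ac$ using the canonical isomorphism $\theta_B\colon C(B,\kappa)\otimes\KK\variso B$, which is natural in $B$. Applied to a short exact sequence $0\to (I,\alpha_I,\iota_I)\to (A,\alpha,\iota)\to (A/I,\alpha^I,\iota^I)\to 0$ in $\kac$, naturality gives a commutative ladder
\[
\xymatrix{
0 \ar[r] & C(I,\iota_I)\otimes\KK \ar[r] \ar[d]_{\theta_I} & C(A,\iota)\otimes\KK \ar[r] \ar[d]_{\theta_A} & C(A/I,\iota^I)\otimes\KK \ar[r] \ar[d]_{\theta_{A/I}} & 0 \\
0 \ar[r] & I \ar[r] & A \ar[r] & A/I \ar[r] & 0
}
\]
in which the bottom row is exact and the vertical arrows are isomorphisms, so the top row is exact as well. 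Because $-\otimes\KK$ is faithfully exact on $C^*$-algebras, it follows that
\[
0 \to C(I,\iota_I) \to C(A,\iota) \to C(A/I,\iota^I) \to 0
\]
is exact; the induced actions $C(\alpha_I)$, $C(\alpha)$, $C(\alpha^I)$ are compatible by construction.

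Composing these two exactness results along $\fix=\dst\circ\cpc$ finishes the argument. The delicate point I anticipate is the bookkeeping that ensures that the $\KK$-structure $V_I\rtimes G$ on the ideal $I\rtimes_{\delta_I}G$ and the $\KK$-structure $V^I\rtimes G$ on the quotient $(A/I)\rtimes_{\delta^I}G$ really are the ones that make the inclusion and quotient into morphisms in $\kac$, but this should be a direct consequence of the explicit form of the maps $j$ and functoriality of $\rtimes G$.
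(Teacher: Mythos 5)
Your proof is correct and follows essentially the same route as the paper's: factor $\fix=\dst\circ\cpc$, reduce exactness of $\cpc$ to Nilsen's theorem on crossed products by coactions, and reduce exactness of $\dst$ to naturality of the destabilization isomorphisms $\theta$ together with the fact that tensoring with $\KK$ detects exactness (which the paper spells out via nuclearity of $\KK$ and the identities $\ker(\tau\otimes\id)=(\ker\tau)\otimes\KK$ and $\ran(\psi\otimes\id)=(\ran\psi)\otimes\KK$). The only difference is that you flag, and the paper silently absorbs, the routine compatibility of the $\KK$-structures $V\rtimes G$ with the inclusion and quotient maps.
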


\begin{proof}
By construction, it suffices to verify that the two functors $\cpc$ and $\dst$ are exact.
The first follows from exactness of the functor
\[
(A,\delta)\mapsto A\rtimes_\delta G
\]
from $\co$ to $\cs$, which is proven in \cite[Theorem~2.3]{nil:full}.

For $\dst$, it suffices to show that the functor
\[
(A,\iota)\mapsto C(A,\iota)
\]
from $\kalg$ to $\cs$ is exact.
This is presumably folklore, but we include the argument for completeness:
let
\[
\xymatrix{
0 \ar[r]
&(I,\rho) \ar[r]^-\psi
&(A,\iota) \ar[r]^-\pi
&(Q,\jmath) \ar[r]
&0
}
\]
be a short exact sequence of $\KK$-algebras.
by naturality of the isomorphisms $\theta$ from destabilization,
the sequence
\[
\xymatrix@C+20pt@R-20pt{
0 \ar[r]
&C(I,\rho)\otimes\KK \ar[r]^-{C(\phi)\otimes\id_\KK}&{}
\\
&C(A,\iota)\otimes\KK \ar[r]^-{C(\pi)\otimes\id_\KK}&{}
\\
&C(Q,\jmath)\otimes\KK \ar[r]
&0
}
\]
is exact.
Abstracting this,
it now suffices to show why a sequence
\[
\xymatrix{
0 \ar[r]
&J \ar[r]^-\psi
&B \ar[r]^-\tau
&R \ar[r]
&0
}
\]
in $\cs$ must be exact if the sequence
\[
\xymatrix{
0 \ar[r]
&J\otimes\KK \ar[r]^-{\psi\otimes\id}
&B\otimes\KK \ar[r]^-{\tau\otimes\id}
&R\otimes\KK \ar[r]
&0
}
\]
is exact.
Since $\KK$ is nuclear,
\[
\ker(\tau\otimes\id)=(\ker\tau)\otimes \KK.
\]
Also,
\[
\ran(\psi\otimes\id)=(\ran\psi)\otimes \KK.
\]
Since $\ker(\tau\otimes\id)=\ran(\psi\otimes\id)$ by assumption,
we must have
\[
\ker\tau=\ran\psi,
\]
as desired.
\end{proof}

\section{Local rigidity}\label{local sec}

In this section we will prove the following generalization of
\cite[Proposition~4.10 and Corollary~4.12]{koqpedersen} from abelian to arbitrary locally compact groups:

\begin{thm}\label{no-go local}
Let $(A,\alpha)$ be an action,
and let $\II$ be a family of $\alpha$-invariant ideals of $A$ such that $A=\clspn \II$.
If for each $I\in\II$ the restricted action $\alpha^I$ is strongly Pedersen rigid, then $\alpha$ is strongly Pedersen rigid.
\end{thm}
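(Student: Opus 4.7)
The plan is to reduce strong Pedersen rigidity of $\alpha$ to that of each restricted action $\alpha^I$ by using the correspondence between $\alpha$-invariant ideals of $A$ and $\what\alpha$-invariant ideals of $A\rtimes_\alpha G$, which is flagged in the introduction as a new result of this paper (proved via Landstad duality for full crossed products). Granted that correspondence, the argument is largely bookkeeping: we transport the family $\II$ across a given conjugacy $\Theta$ of dual coactions, apply strong Pedersen rigidity on each piece, and then reassemble by taking closed linear spans.

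Here is how I would carry it out. Let $(B,\beta)$ be any action and $\Theta\colon (A\rtimes_\alpha G,\what\alpha)\variso (B\rtimes_\beta G,\what\beta)$ a conjugacy; the goal is $\Theta(i_A(A))=i_B(B)$. For each $I\in\II$, the invariance of $I$ implies that $I\rtimes_\alpha G$ is a $\what\alpha$-invariant ideal of $A\rtimes_\alpha G$; since $\Theta$ is $\what\alpha$--$\what\beta$ equivariant, $\Theta(I\rtimes_\alpha G)$ is a $\what\beta$-invariant ideal of $B\rtimes_\beta G$, so by the ideal correspondence on the $B$ side there is a unique $\beta$-invariant ideal $J_I\subset B$ with $\Theta(I\rtimes_\alpha G)=J_I\rtimes_\beta G$. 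Restricting $\Theta$ then produces a conjugacy $(I\rtimes_\alpha G,\what{\alpha^I})\variso (J_I\rtimes_\beta G,\what{\beta^{J_I}})$ of the dual coactions of $\alpha^I$ and $\beta^{J_I}$. Since $\alpha^I$ is strongly Pedersen rigid by hypothesis, this restriction sends $i_I(I)$ onto $i_{J_I}(J_I)$; identifying $i_I(I)$ with $i_A(I)\subset i_A(A)$ (and similarly for $B$), we obtain $\Theta(i_A(I))=i_B(J_I)$ for every $I\in\II$.

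Finally I would take closed linear spans over $\II$. Using $A=\clspn\II$, so $i_A(A)=\clspn\{i_A(I):I\in\II\}$, it follows that
\[
\Theta(i_A(A))=\clspn\{i_B(J_I):I\in\II\}=i_B\bigl(\clspn\{J_I:I\in\II\}\bigr).
\]
To close, I must verify $\clspn\{J_I:I\in\II\}=B$: the hypothesis $A=\clspn\II$ gives $A\rtimes_\alpha G=\clspn\{I\rtimes_\alpha G:I\in\II\}$ (sums of invariant ideals correspond under the bijection), so applying $\Theta$ gives $B\rtimes_\beta G=\clspn\{J_I\rtimes_\beta G\}$, and the correspondence then forces $\clspn\{J_I\}=B$. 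The main obstacle is thus not in the present theorem at all but in the auxiliary ideal correspondence for the dual coaction of an arbitrary (not necessarily amenable) group; once that is in hand, the reduction above is straightforward.
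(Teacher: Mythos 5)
Your proof is correct and takes essentially the same route as the paper: the paper's own proof of \thmref{no-go local} simply invokes the argument of \cite[Proposition~4.10]{koqpedersen} with the Gootman--Lazar ideal correspondence (which requires amenability) replaced by \thmref{ideal}. Your reconstruction --- transporting the invariant ideals across $\Theta$ via that correspondence, applying strong Pedersen rigidity on each piece, and recovering $i_B(B)$ by taking closed linear spans --- is precisely that argument, with the ideal correspondence correctly identified as the only nontrivial ingredient.
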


\begin{proof}
We only need one modification of the proof of \cite[Proposition~4.10]{koqpedersen}: instead of referring to \cite[Theorem~3.4]{gl},
we use \thmref{ideal} below instead.
\end{proof}

The above proof
rests upon the following
correspondence between invariant ideals of an action and of the dual coaction.
It is proved for amenable $G$ in \cite[Theorem~3.4]{gl}.

\begin{thm}\label{ideal}
For any action $(A,\alpha)$,
the assignment
$I\mapsto I\rtimes G$
gives
a one-to-one correspondence
between $\alpha$-invariant ideals of $A$ and 
strongly $\what\alpha$-invariant ideals of $A\rtimes_\alpha G$.
\end{thm}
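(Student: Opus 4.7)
The plan is to invoke categorical Landstad duality for full crossed products together with the exactness of the $\fix$ functor established in \lemref{exact lem}.

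The forward direction---that $I\rtimes G$ is a strongly $\what\alpha$-invariant ideal of $A\rtimes_\alpha G$ whenever $I$ is an $\alpha$-invariant ideal of $A$, and that the map $I\mapsto I\rtimes G$ is injective---is standard, using exactness of the full crossed-product functor on $\ac$ and faithfulness of $i_A$. The substantive content lies in surjectivity, so fix a strongly $\what\alpha$-invariant ideal $J$ of $A\rtimes_\alpha G$. Then $\what\alpha$ restricts to a maximal coaction on $J$ and descends to a maximal coaction on $(A\rtimes_\alpha G)/J$, producing a short exact sequence
\[
0 \to J \to A\rtimes_\alpha G \to (A\rtimes_\alpha G)/J \to 0
\]
in $\co^m$.

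The next step is to promote this to a short exact sequence in $\eqcom$ by installing compatible equivariant homomorphisms from $C^*(G)$. The middle term already carries the canonical $i_G$. Since $J$ is an ideal, left and right multiplication by $i_G(c)$ preserve $J$, so $i_G$ restricts to a homomorphism $V_J\:C^*(G)\to M(J)$; this is nondegenerate by a standard approximate-identity argument using nondegeneracy of $i_G$, and it is $\delta_G$--$\what\alpha|_J$ equivariant because $i_G$ is $\delta_G$--$\what\alpha$ equivariant. Composing $i_G$ with the quotient map yields an analogous $V_Q$ on $(A\rtimes_\alpha G)/J$. Applying the exact functor $\fix$ and identifying $\fix(A\rtimes_\alpha G,\what\alpha,i_G)$ with $(A,\alpha)$ via Landstad duality then produces an exact sequence
\[
0 \to \fix(J,\what\alpha|_J,V_J) \to (A,\alpha) \to \fix\bigl((A\rtimes_\alpha G)/J,\,\what\alpha^J,V_Q\bigr) \to 0
\]
in $\ac$, which exhibits $I := \fix(J,\what\alpha|_J,V_J)$ as an $\alpha$-invariant ideal of $A$.

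Finally I would show that $I\rtimes G$ coincides with $J$ as ideals of $A\rtimes_\alpha G$. Applying the quasi-inverse $\cpa$ gives $\cpa(I,\alpha|_I) = (I\rtimes G,\what{\alpha|_I},i_G^I)$, and the natural isomorphism $\cpa\circ\fix \simeq \id_{\eqcom}$ identifies this with $(J,\what\alpha|_J,V_J)$. Naturality applied to the monomorphism $(J,\what\alpha|_J,V_J) \hookrightarrow (A\rtimes_\alpha G,\what\alpha,i_G)$ in $\eqcom$ forces the resulting isomorphism $I\rtimes G\simeq J$ to be compatible with the two inclusions into $A\rtimes_\alpha G$, so $I\rtimes G = J$ as subalgebras. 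The main obstacle I expect is the careful setup of the equivariant structure $V_J$ on the ideal $J$---ensuring it is well-defined, nondegenerate, and equivariant---and then tracking the naturality of $\cpa\circ\fix\simeq\id$ carefully enough to obtain the genuine equality $I\rtimes G = J$ of subalgebras rather than merely an abstract isomorphism.
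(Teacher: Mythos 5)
Your proposal is correct and follows essentially the same route as the paper's proof: restricting $i_G$ to the ideal to get an object of $\eqcom$ (this is the paper's Lemma~\ref{restrict}, which you reprove inline), applying the exact functor $\fix$ from Lemma~\ref{exact lem}, and then using the natural isomorphisms of categorical Landstad duality to identify $I\rtimes G$ with the given ideal and to obtain injectivity.
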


\begin{proof}
First,
let $K$ be a strongly $\what\alpha$-invariant ideal of $A\rtimes_\alpha G$.
Then
the elementary \lemref{restrict} below gives us an equivariant maximal coaction
\[
(K,\what\alpha^K,(i_G)_K).
\]
Then we have a short exact sequence
in $\eqcom$,
so by \lemref{exact lem} we can apply the functor $\fix$ to get a short exact sequence
\begin{equation}\label{LBR}
\xymatrix{
0 \ar[r]
&(L,\gamma) \ar[r]
&(B,\beta) \ar[r]
&(R,\sigma) \ar[r]
&0
}
\end{equation}
in $\ac$.
The natural equivariant isomorphism $\theta\:B\variso A$
takes the ideal $L$ to an $\alpha$-invariant ideal $I$ of $A$.
Let $\mu$ be the restriction of the action $\alpha$ to $I$.
Then $I\rtimes_\mu G$ is an ideal of $A\rtimes_\alpha G$,
and it remains to show that
\[
I\rtimes_\mu G=K.
\]
Applying the crossed-product functor $\cpa$ to the short exact sequence \eqref{LBR}
gives a $\what\beta$-invariant ideal $L\rtimes_\gamma G$ of $B\rtimes_\beta G$.
Then applying the natural isomorphism
\[
\Theta\:\cpa\circ \fix\variso \id,
\]
we have
\[
\Theta(L\rtimes G)=K.
\]
On the other hand,
by the category equivalence we have $\Theta=\theta\rtimes G$,
so
\begin{align*}
I\rtimes G
&=\theta(L)\rtimes G
\\&=\Theta(L\rtimes G)
\\&=K.
\end{align*}

We turn to the uniqueness.
Suppose that 
$I$ and $J$ are $\alpha$-invariant ideals of $A$ such that
\[
I\rtimes G=J\rtimes G=K.
\]
Applying the natural isomorphism
$\theta\:\fix\circ\cpa\variso \id$,
we get
\[
I=\theta(\fix K)=J.
\qedhere
\]
\end{proof}

\begin{lem}\label{restrict}
Let $(B,\delta,V)$ be an equivariant coaction,
and let $K$ be a strongly $\delta$-invariant ideal of $B$.
Then there is a unique $\delta_G-\delta_K$ equivariant nondegenerate homomorphism
\[
V_K\:C^*(G)\to M(K)
\]
such that
for all $c\in C^*(G)$ and $k\in K$ we have
\begin{equation}\label{VK}
V_K(c)k=V(c)k.
\end{equation}
\end{lem}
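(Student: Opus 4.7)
The plan is to define $V_K$ by restricting, for each $c\in C^*(G)$, the two-sided action of $V(c)\in M(B)$ to the ideal $K$. Concretely, choose an approximate identity $(e_\lambda)$ of $K$; then for $k\in K$, since $V(c)k\in B$ and $(V(c)k)e_\lambda\in BK\subset K$, one has $V(c)k=\lim_\lambda (V(c)k)e_\lambda\in\overline{BK}=K$, and similarly $kV(c)\in K$. The pair of maps $k\mapsto V(c)k$ and $k\mapsto kV(c)$ therefore defines a multiplier $V_K(c)\in M(K)$, and $c\mapsto V_K(c)$ is a $*$-homomorphism because $V$ is. Equation~\eqref{VK} holds by construction.

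Next I would verify nondegeneracy of $V_K$. Let $(f_\lambda)$ be a bounded approximate identity for $C^*(G)$; since $V$ is nondegenerate, $V(f_\lambda)\to 1_{M(B)}$ strictly, so $V_K(f_\lambda)k=V(f_\lambda)k\to k$ in norm for every $k\in K$, showing that $\overline{V_K(C^*(G))K}=K$.

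The $\delta_G$--$\delta_K$ equivariance of $V_K$ I would deduce from that of $V$, using that strong $\delta$-invariance of $K$ makes $\delta_K$ literally the restriction of $\delta$ to $K$. For $c,d\in C^*(G)$ and $k\in K$, I would compute
\[
\delta_K(V_K(c))(k\otimes d)=\delta(V(c))(k\otimes d)=(V\otimes\id)(\delta_G(c))(k\otimes d)=(V_K\otimes\id)(\delta_G(c))(k\otimes d),
\]
where the outer equalities use that the extensions of $V$ and $V_K$ agree after multiplication by elements of $K$ (on the left) or of $K\otimes C^*(G)$ (after tensoring), and the middle equality is the equivariance of $V$. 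Since $\overline{\delta_K(K)(1\otimes C^*(G))}=K\otimes C^*(G)$ by strong invariance, this is enough to identify $\delta_K\circ V_K$ with $(V_K\otimes\id)\circ\delta_G$ as homomorphisms into $\tilde M(K\otimes C^*(G))$.

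Uniqueness is then immediate: any competing $*$-homomorphism $V_K'$ satisfying \eqref{VK} agrees with $V_K$ as a left multiplier of $K$, and taking adjoints via $V_K'(c^*)=V_K'(c)^*$ forces $kV_K'(c)=kV_K(c)$ as well, so $V_K'=V_K$ in $M(K)$. The one step that requires real care is the equivariance verification, where the extensions of $V_K\otimes\id$ and of $\delta_K$ to the relevant multiplier algebras must be controlled by strict continuity; everything else is essentially formal.
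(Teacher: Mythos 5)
Your construction is the same as the paper's: the multiplier $V_K(c)$ you build by hand via an approximate identity is exactly $\sigma(V(c))$ for the canonical nondegenerate homomorphism $\sigma\:B\to M(K)$, $\sigma(b)k=bk$, extended to $M(B)$, which is how the paper defines $V_K=\sigma\circ V$. Your argument is correct, and in fact more complete than the paper's, which records only the identity \eqref{VK} and leaves nondegeneracy, equivariance, and uniqueness unverified.
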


\begin{proof}
Let $\sigma\:B\to M(K)$ be the canonical nondegenerate homomorphism given by
\[
\sigma(b)k=bk\righttext{for}b\in B,k\in K.
\]
Define $V_K$ by the commutative diagram
\[
\xymatrix{
C^*(G) \ar[r]^-V \ar[dr]_{V_K}
&M(B) \ar[d]^\sigma
\\
&M(K).
}
\]
Then \eqref{VK} holds because
\[
V_K(c)k
=\sigma(V(c))k
=V(c)k.
\qedhere
\]
\end{proof}

\subsection*{Bundles}\label{bundle subsec}

In \cite[Corollary~4.12]{koqpedersen} we proved that
when $G$ is abelian
every locally unitary action on a continuous trace $C^*$-algebra is strongly Pedersen rigid.
This followed immediately upon combining
\cite[Propositions~4.6 and 4.10]{koqpedersen},
on unitary actions and inductive limits of actions, respectively.
While
\thmref{no-go local}
is a fully functional generalization of
\cite[Proposition~4.10]{koqpedersen}
to nonabelian groups,
\corref{no-go unitary} below is only a partial generalization of
\cite[Proposition~4.6]{koqpedersen},
restricting to
unitary actions on finite-dimensional $C^*$-algebras.
Consequently, the best we can do toward locally unitary actions
of arbitrary groups is
pointwise unitary actions on direct sums of finite-dimensional algebras,
because we need the ideals of the bundle $C^*$-algebra determined
by neighborhoods in the base space
to be finite-dimensional.
But in fact this case is so special that it is not really a bundle result at all:

\begin{cor}\label{no-go bundle}
Every direct sum of strongly Pedersen rigid actions is strongly Pedersen rigid.
\end{cor}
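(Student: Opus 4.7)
The plan is to derive the corollary as an immediate application of \thmref{no-go local}. Suppose $(A,\alpha) = \bigoplus_{i \in J}(A_i,\alpha_i)$, where each summand $(A_i,\alpha_i)$ is strongly Pedersen rigid. In the $c_0$-direct sum, each $A_i$ is canonically an $\alpha$-invariant ideal of $A$, and the restriction of $\alpha$ to this ideal coincides with $\alpha_i$. So the goal is to realize the direct-sum decomposition as exactly the sort of family of $\alpha$-invariant ideals that \thmref{no-go local} treats.

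First I would set up the family $\II = \{A_i : i \in J\}$ and verify the spanning hypothesis $A = \clspn \II$. This is built into the definition of the $c_0$-direct sum: the algebraic direct sum $\bigoplus_{i \in J}^{\text{alg}} A_i = \spn \bigcup_i A_i$ is norm-dense in $A$.

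Next I would observe that the restricted action on each $A_i$ is just $\alpha_i$ itself, which is strongly Pedersen rigid by hypothesis. With both hypotheses of \thmref{no-go local} in place, that theorem immediately delivers the conclusion that $\alpha$ is strongly Pedersen rigid.

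There is no genuine obstacle here; the corollary is really a repackaging of \thmref{no-go local} in the very special situation where the family of invariant ideals arises from an orthogonal decomposition. All the substantive work has been absorbed into \thmref{no-go local} and, behind it, into the invariant-ideal correspondence of \thmref{ideal} together with the exactness of $\fix$ established in \lemref{exact lem}.
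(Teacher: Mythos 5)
Your proof is correct and matches the paper's: the corollary is stated there as an immediate special case of \thmref{no-go local}, with the summands $A_i$ serving as the family of $\alpha$-invariant ideals whose closed span is $A$. You have merely spelled out the (routine) verification of the hypotheses, which the paper leaves implicit.
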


This is of course a special case of \thmref{no-go local}.

\section{Finite-dimensional algebras and unitary actions}\label{unitary sec}

\begin{thm}\label{sufficient}
Let $(A,\delta)$ be a maximal coaction.
Suppose that there is an equivariant homomorphism $V\:G\to M(A)$ such that
the generalized fixed-point algebra
$B=A^{\delta,V}$
satisfies:
\begin{enumerate}
\item $B=M(A)_e$, and

\item the norm topology on $B$ equals the relative strict topology from $M(A)$.

\end{enumerate}
Then $(A,\delta)$ is strongly fixed-point rigid.
\end{thm}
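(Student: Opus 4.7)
The plan is to show that for any $\delta_G$-$\delta$ equivariant strictly continuous unitary homomorphism $W\:G\to M(A)$, the fixed-point algebra $A^{\delta,W}$ coincides with $B$. Landstad's first condition together with hypothesis (1) immediately yields $A^{\delta,W}\subseteq M(A)_e=B$, so the task reduces to verifying, for each $m\in B$, the remaining two Landstad conditions with $W$ in place of $V$.

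The central device is to compare $W$ with the given $V$ through a cocycle. Put $u_s:=W_sV_s^*$. Since both $V_s$ and $W_s$ lie in the spectral subspace $M(A)_s$, and spectral subspaces multiply as $M(A)_sM(A)_t\subseteq M(A)_{st}$ (immediate from the fact that the extension $\delta\:M(A)\to M(A\otimes C^*(G))$ is a homomorphism), $u_s$ lies in $M(A)_e=B$ and is unitary. A direct calculation gives $u_{st}=u_s\alpha_s(u_t)$ for the action $\alpha=\ad V$ of $G$ on $B$, so $u$ is an $\alpha$-cocycle with $W_s=u_sV_s$. Strict continuity of $V$ and $W$ in $M(A)$ makes $s\mapsto u_s$ strictly continuous, and hypothesis (2) then upgrades this to norm continuity into $B$. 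The same upgrade disposes of Landstad's third condition for $W$: for $m\in B$ one has $\ad W_s(m)\in M(A)_e=B$, the map $s\mapsto\ad W_s(m)$ is strictly continuous into $M(A)$ by a standard computation using strict continuity of $W$, and hence norm continuous by~(2).

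What remains is Landstad's second condition, namely that $mW(f),W(f)m\in A$ for $f\in C_c(G)$. Here I invoke Landstad duality for full crossed products, which identifies $(A,\delta)$ with $(B\rtimes_\alpha G,\what\alpha)$ in such a way that $V$ corresponds to $i_G$ and the inclusion $B\hookrightarrow M(A)$ to $i_B$. Using $W_s=u_sV_s$ together with the intertwining $V_sm=\alpha_s(m)V_s$, one rewrites
\[
W(f)m=\int_G f(s)\,u_s\alpha_s(m)\,V_s\,ds.
\]
The integrand $s\mapsto f(s)u_s\alpha_s(m)$ is a norm-continuous compactly supported $B$-valued function, thanks to the norm continuity of $u$ just established and strong continuity of $\alpha$. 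Under the Landstad identification $A\cong B\rtimes_\alpha G$, such integrals are precisely the elements of the canonical dense $C_c(G,B)$-subalgebra of $B\rtimes_\alpha G$, so $W(f)m\in A$; the argument for $mW(f)$ is symmetric.

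I expect the main obstacle to be exactly this final step, converting the a priori $M(A)$-valued strict integral defining $W(f)m$ into a manifest element of $A$. Both hypotheses pull their weight here: hypothesis~(1) is what pins $u_s$ down as a genuine element of $B$ rather than just a unitary in $M(A)$, and hypothesis~(2) is what gives the cocycle $u$ the norm continuity needed to place the integrand in $C_c(G,B)$ so that the Landstad identification can be applied.
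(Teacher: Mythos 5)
Your argument is correct, and its core coincides with the paper's: both proofs hinge on the unitary $u_s=W_sV_s^*$, which hypothesis (1) places in $B=M(A)_e$ and hypothesis (2) upgrades from strictly to norm continuous, making it a cocycle for $\alpha=\ad V$. The difference is entirely in the endgame. The paper never checks Landstad's conditions for $W$ directly: it forms the exterior equivalent action $\gamma=\ad u\circ\alpha$ on $B$, applies Pedersen's isomorphism $(B\rtimes_\gamma G,\what\gamma)\variso(B\rtimes_\alpha G,\what\alpha)$, and observes that under Landstad duality this identifies $(B\rtimes_\gamma G,\what\gamma,i_G^\gamma)$ with $(A,\delta,W)$ and $i_B^\gamma$ with the inclusion $B\hookrightarrow M(A)$, so $A^{\delta,W}=B$ falls out of the uniqueness clause of Landstad duality. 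You instead verify Landstad's second and third conditions for $W$ element by element, invoking the identification $A\cong B\rtimes_\alpha G$ only to recognize $W(f)m=\int_G f(s)\,u_s\alpha_s(m)V_s\,ds$ as the image of a function in $C_c(G,B)$; note that the norm continuity of $s\mapsto\alpha_s(m)$ you need there is exactly Landstad's third condition for $V$, which holds automatically because $m\in B=A^{\delta,V}$. Your route is more hands-on and makes visible precisely where each hypothesis is spent, while the paper's is shorter once Pedersen's theorem and full Landstad duality are on the table and explains conceptually why the two generalized fixed-point algebras agree: the two equivariant homomorphisms differ by a cocycle, hence exhibit $(A,\delta)$ as the dual coaction of two exterior equivalent actions on the same subalgebra. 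Both arguments are sound.
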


\begin{proof}
Let $W\:G\to M(A)$ be another equivariant 
homomorphism,
and let $C=A^{\delta,W}$.
We must show that $C=B$.
First note that $C\subset M(A)_e=B$.

Define $U\:G\to M(A)$ by
\[
U_s=W_sV_s^*.
\]
Since $V,W$ are strictly continuous and bounded, $U$ is also strictly continuous.
The equivariance of $V,W$
trivially implies by direct computation
that
for all $s\in G$ we have
\[
U_s\in M(A)_e=B.
\]
By hypothesis (2),
the map $U\:G\to B$ is norm continuous.
A trivial computation shows that for all $s,t\in G$,
\[
U_{st}=U_s\ad W_s(U_t)=U_s\alpha_s(U_t).
\]
Thus $U$ is an $\alpha$-cocycle,
so we can define an exterior equivalent action
\[
\gamma=\ad U\circ\alpha\:G\curvearrowright B.
\]

Recall
the Pedersen isomorphism
\[
\Theta\:(B\rtimes_\gamma G,\what\gamma)\variso (B\rtimes_\alpha G,\what\alpha)
\]
determined by
\[
\Theta\circ i_B^\gamma=i_B^\alpha
\midtext{and}
\Theta(i_G^\gamma(s))=i_B^\alpha(U_s)i_G^\alpha(s).
\]
On the other hand, by Landstad we may take
\begin{align*}
(B\rtimes_\alpha G,\what\alpha,i_G^\alpha)&=(A,\delta,V)&
B&=A^{\delta,V}\\
(B\rtimes_\gamma G,\what\gamma,i_G^\gamma)&=(A,\delta,W)&
B&=A^{\delta,W},
\end{align*}
with $i_B^\alpha=i_B^\gamma$ both being the inclusion $B\hookrightarrow M(A)$.
In particular, we may assume that $\Theta=\id_A$.
Since we have $C=A^{\delta,W}$ by definition, we conclude that $C=B$,
as desired.
\end{proof}

\begin{ex}
The conditions in \thmref{sufficient} certainly required the generalized fixed-point algebra $B=A^{\delta,V}$ to be unital. However, it is important to keep in mind that it is possible for $B$ to be unital without satisfying condition (2).
For example, let $G=\T$ acting on $C(\T)$ by translation, so that the crossed product $A=C(\T)\rtimes \T$ is the compact operators $\KK$ on $L^2(\T)$.
Then the generalized fixed-point algebra is the unital algebra of multiplication operators $M_\phi$ for $\phi\in C(\T)$.
On bounded sets, the strict topology on $M(\KK)=B(L^2(\T))$ agrees with the strong* topology.
Let $(\phi_n)$ be a sequence of unit vectors in $C(\T)$ whose supports $S_n$ shrink to a point.
Then the multiplication operators $M_{\phi_n}$ go to 0 strong*, but all have norm 1.
Therefore the norm topology of $B=\{M_\phi:\phi\in C(\T)\}$ is strictly stronger than the relative strict topology from $M(\KK)$.
We thank Dana Williams for discussions leading to this example.
\end{ex}

\begin{prop}\label{id tensor}
Let $(B,\alpha)$ be an action such that
\begin{enumerate}
\item $i_B(B)=M(B\rtimes_\alpha G)_e$, and
\item the norm topology on $i_B(B)$ equals the relative strict topology from $M(B\rtimes_\alpha G)$.
\end{enumerate}
Then for any finite-dimensional $C^*$-algebra $A$,
the action
$(A\otimes B,\id\otimes \alpha)$
is strongly Pedersen rigid.
\end{prop}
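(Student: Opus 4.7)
The plan is to apply \thmref{sufficient} to the dual coaction of $(A\otimes B,\id\otimes\alpha)$. Write $D=B\rtimes_\alpha G$ and identify $(A\otimes B)\rtimes_{\id\otimes\alpha}G\simeq A\otimes D$ with dual coaction $\id\otimes\what\alpha$ (which is automatically maximal, being the dual coaction of an action). The natural candidate for the equivariant homomorphism is
\[
V\:G\to M(A\otimes D),\qquad V_s=1_A\otimes i_G(s),
\]
where $i_G\:G\to M(D)$ is the canonical map coming from $\cpa(B,\alpha)=(D,\what\alpha,i_G)$. Equivariance of $V$ is immediate from equivariance of $i_G$.

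First I would identify the generalized fixed-point algebra of $(A\otimes D,\id\otimes\what\alpha,V)$ as $A\otimes i_B(B)$, and in particular show it equals the full $e$-spectral subspace $M(A\otimes D)_e$. Since $A$ is finite-dimensional and unital, $M(A\otimes D)=A\otimes M(D)$. Fixing a vector-space basis $\{e_\lambda\}$ of $A$ with dual functionals $\{\phi_\lambda\}$, every $m\in A\otimes M(D)$ has a unique expansion $m=\sum_\lambda e_\lambda\otimes m_\lambda$ with $m_\lambda=(\phi_\lambda\otimes\id)(m)\in M(D)$. The extended coaction satisfies $(\id\otimes\what\alpha)(m)=\sum_\lambda e_\lambda\otimes\what\alpha(m_\lambda)$, and $(\id\otimes\what\alpha)(m)=m\otimes 1$ decouples coordinatewise into $\what\alpha(m_\lambda)=m_\lambda\otimes 1$, i.e.\ $m_\lambda\in M(D)_e$. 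Hypothesis (1) on $(B,\alpha)$ then gives
\[
M(A\otimes D)_e=A\otimes i_B(B),
\]
which is condition (1) of \thmref{sufficient}. The remaining Landstad conditions for $V=1\otimes i_G$ likewise decouple across $\lambda$ and reduce to the Landstad conditions defining $i_B(B)=D^{\what\alpha,i_G}$, which hold automatically.

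Next I would verify condition (2) of \thmref{sufficient}: that the norm topology on $A\otimes i_B(B)$ agrees with the relative strict topology from $M(A\otimes D)$. Norm convergence always implies strict convergence, so the issue is the reverse implication. Suppose $x_\mu\to x$ strictly with $x_\mu,x\in A\otimes i_B(B)$, and expand $x_\mu=\sum_\lambda e_\lambda\otimes b_\mu^\lambda$, $x=\sum_\lambda e_\lambda\otimes b^\lambda$ with $b_\mu^\lambda,b^\lambda\in i_B(B)$. Testing strict convergence against $y=1_A\otimes d$ for $d\in D$ and applying the bounded linear maps $\phi_\lambda\otimes\id$ shows that $(b_\mu^\lambda-b^\lambda)d\to 0$ in norm for each $\lambda$ and $d$; the left-multiplicative version is analogous. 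Hence each $b_\mu^\lambda\to b^\lambda$ strictly in $M(D)$, and hypothesis (2) upgrades this to norm convergence $b_\mu^\lambda\to b^\lambda$ in $i_B(B)$. Since $\lambda$ ranges over the finite index set associated with the basis of $A$, this yields $x_\mu\to x$ in norm.

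With both hypotheses of \thmref{sufficient} verified, we conclude that $(A\otimes D,\id\otimes\what\alpha)$ is strongly fixed-point rigid, which is precisely the assertion that $(A\otimes B,\id\otimes\alpha)$ is strongly Pedersen rigid. The main obstacle---and the single point where finite-dimensionality of $A$ is essential---is the identification $M(A\otimes D)=A\otimes M(D)$ together with the matrix-unit extraction that transfers the strict/norm comparison from $A\otimes i_B(B)$ back to $i_B(B)$ one coordinate at a time; without finite-dimensionality one would lose both the multiplier factorization and the equivalence-of-norms step used to reassemble the coordinates.
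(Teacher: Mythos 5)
Your proposal is correct and follows essentially the same route as the paper: reduce to \thmref{sufficient}, identify $(A\otimes B)\rtimes_{\id\otimes\alpha}G$ with $A\otimes(B\rtimes_\alpha G)$, use finite-dimensionality to write $M(A\otimes D)=A\otimes M(D)$, and decouple the $e$-spectral-subspace condition coordinatewise along a basis of $A$ to get $M(A\otimes D)_e=A\otimes i_B(B)$. The only difference is that you also spell out the verification of hypothesis (2) of \thmref{sufficient} (norm topology versus relative strict topology on $A\otimes i_B(B)$) via the same coordinate projections $\phi_\lambda\otimes\id$, a step the paper's proof leaves implicit; your argument for it is sound.
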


\begin{proof}
The assumptions on $(B,\alpha)$ are that the dual coaction $\what\alpha$ satisfies the hypotheses of \thmref{sufficient},
and by that same theorem it suffices to show that
dual coaction $\what{\id\otimes\alpha}$ also satisfies those hypotheses.
We have
\[
\bigl((A\otimes B)\rtimes_{\id\otimes\alpha} G,\what{\id\otimes\alpha}\bigr)
=\bigl(A\otimes (B\rtimes_\alpha G),\id\otimes \what\alpha\bigr).
\]
Moreover,
\[
i_{A\otimes B}(A\otimes B)=A\otimes i_B(B).
\]
We want to show that
\[
A\otimes i_B(B)=M\bigl(A\otimes (B\rtimes_\alpha G)\bigr)_e.
\]
Trivially the left side is contained in the right, so
let
\[
m\in M\bigl(A\otimes (B\rtimes_\alpha G)\bigr)_e.
\]
Note that
\[
M\bigl(A\otimes (B\rtimes_\alpha G)\bigr)
=A\otimes M(B\rtimes_\alpha G)
\]
since $A$ is finite-dimensional.
Choose a basis $\{a_1,\dots,a_n\}$ for $A$.
Then $m=\sum_{i=1}^n(a_i\otimes m_i)$ with $m_i\in M(B\rtimes_\alpha G)$,
and we have
\begin{align*}
\sum_{i=1}^n(a_i\otimes m_i\otimes 1)
&=m\otimes 1
=(\id\otimes \what\alpha)(m)
\\&=\sum_{i=1}^n (a_i\otimes \what\alpha(m_i)),
\end{align*}
so because $\{a_1,\dots,a_n\}$ is linearly independent we see that
for each $i$ we have
\[
\what\alpha(m_i)=m_i\otimes 1,
\]
and hence $m_i\in M(B\rtimes_\alpha G)_e=i_B(B)$.
Therefore $m\in A\otimes i_B(B)$, as desired.
\end{proof}

\begin{cor}\label{symmetric}
For any locally compact group $G$,
the trivial action of $G$ on $\C$ is strongly Pedersen rigid.
\end{cor}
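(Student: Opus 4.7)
The plan is to reduce the statement to \propref{id tensor} by taking $A=B=\C$ and $\alpha$ the trivial action on $\C$, so that $(A\otimes B,\id\otimes\alpha)$ is precisely the trivial action of $G$ on $\C$. The task then becomes to verify the two hypotheses of that proposition for $(B,\alpha)=(\C,\alpha)$.

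The crossed product $\C\rtimes_\alpha G$ is $C^*(G)$, with dual coaction $\delta_G$ and canonical inclusion $i_\C(\C)=\C\cdot 1_{M(C^*(G))}$. Condition~(2) of \propref{id tensor}, concerning the agreement of the norm topology on $i_\C(\C)$ with the relative strict topology, is immediate because $\C\cdot 1$ is one-dimensional and any two Hausdorff vector-space topologies on it coincide.

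For condition~(1), I must verify $M(C^*(G))_e=\C\cdot 1$. The inclusion $\supset$ is trivial since $\delta_G(1)=1\otimes 1$. For the reverse inclusion, I would use the counit of the Hopf $C^*$-algebra structure on $C^*(G)$: let $\epsilon\:C^*(G)\to\C$ be the nondegenerate $*$-homomorphism obtained by integrating the trivial representation $s\mapsto 1$, and extend it to $\epsilon\:M(C^*(G))\to\C$. On $s\in G\subset M(C^*(G))$ one has $(\epsilon\otimes\id)\circ \delta_G(s)=(\epsilon\otimes\id)(s\otimes s)=s$, so the counit identity $(\epsilon\otimes\id)\circ\delta_G=\id$ holds on $G$, extends to $C^*(G)$ by linearity and continuity, and finally to $M(C^*(G))$ by strict continuity (both $\delta_G$ and $\epsilon\otimes\id$ are strictly continuous on multiplier algebras by nondegeneracy). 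Any $m\in M(C^*(G))_e$ therefore satisfies
\[
m=(\epsilon\otimes\id)(\delta_G(m))=(\epsilon\otimes\id)(m\otimes 1)=\epsilon(m)\cdot 1,
\]
so $m\in\C\cdot 1$.

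The main (and really only) obstacle is verifying condition~(1), which is handled cleanly by the counit trick above; everything else is formal. As an alternative route, one could apply \thmref{sufficient} directly to $(C^*(G),\delta_G)$ with $V=i_G\:G\to M(C^*(G))$, in which case Landstad duality identifies the generalized fixed-point algebra $B=C^*(G)^{\delta_G,i_G}$ as $\C\cdot 1$, and the same counit identity supplies hypothesis~(1).
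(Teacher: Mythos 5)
Your proof is correct, and its overall architecture matches the paper's: everything reduces to the single identity $M(C^*(G))_e=\C\,1_{M(C^*(G))}$, after which one can either apply \thmref{sufficient} directly to $(C^*(G),\delta_G,i_G)$ (as the paper does, and as you note in your closing remark) or route through \propref{id tensor} with $A=\C$; the two are the same computation, since condition~(2) is automatic for a one-dimensional algebra. Where you genuinely diverge is in how that identity is established. The paper slices $\delta_G(m)=m\otimes 1$ by a functional $f\in B(G)$ with $f(e)\ne 0$ and invokes the cocommutativity $\delta_G=\Sigma\circ\delta_G$ to equate $(\id\otimes f)\circ\delta_G(m)=f(e)m$ with $(f\otimes\id)\circ\delta_G(m)=f(m)1$. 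You instead use the counit of the Hopf $C^*$-algebra $(C^*(G),\delta_G)$ --- the integrated trivial representation $\epsilon$ --- and the counit identity $(\epsilon\otimes\id)\circ\delta_G=\id$ to get $m=\epsilon(m)1$ in one step. Both devices are verified on group elements and extended by (strict) continuity, and both depend on working with the \emph{full} group $C^*$-algebra: your $\epsilon$ does not factor through $C^*_r(G)$ for nonamenable $G$, and the paper's $f$ is drawn from $B(G)=C^*(G)^*$. Your version is marginally cleaner in that it needs only the counit and not cocommutativity, though for $\delta_G$ both properties are equally immediate; neither argument extends beyond this particular coaction.
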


\begin{proof}
We will apply \thmref{sufficient}
to prove the equivalent statement that
the canonical coaction $\delta_G$,
which is the dual coaction on the crossed product $C^*(G)=\C\rtimes G$,
is strongly fixed-point rigid.
For this it suffices to prove that
\[
M(C^*(G))_e=\C 1_{M(C^*(G))}
\]
This fact is presumably folklore, but we could not find it in the literature,
so we include the proof.
Obviously the right-hand side is contained in the left.
On the other hand, if $m\in M(C^*(G))_e$,
then
\[
\delta_G(m)=m\otimes 1,
\]
and slicing by $f\in B(G)$ gives
\[
(\id\otimes f)\circ\delta_G(m)=f(e)m.
\]
But the homomorphism $\delta_G$ is symmetric:
\[
\delta_G=\Sigma\circ\delta_G
\]
where $\Sigma$ is the flip automorphism of $C^*(G)\otimes C^*(G)$,
consequently
\[
(\id\otimes f)\circ\delta_G(m)
=(f\otimes\id)\circ\delta_G(m)
=f(m)1.
\]
So, as long as we choose $f$ with $f(e)\ne 0$,
we conclude that $m\in \C 1$.
\end{proof}

Let $H$ be a subgroup of $G$
with finite index.
Let $X=G/H$,
and let $G$ act on $X$ by left translation.
Choose a cross section $x\mapsto \eta_x$ of $X$ in $G$.
It is well-known that the map
$\varphi\:G\times X\to H$
defined by
\[
\varphi(s,x)=\eta_{sx}\inv s\eta_x
\]
is a cocycle
for the action $G\curvearrowright X$,
i.e.,
\[
\varphi(st,x)=\varphi(s,tx)\varphi(t,x).
\]
Note that $\varphi(e,x)=e$ for all $x\in X$, as is the case for all cocycles.
Let $C(X)$ be the commutative $C^*$-algebra of functions on the discrete space $X$, and
$\alpha\:G\curvearrowright C(X)$ be the associated action.
Let $M_X$ denote the matrix algebra on $X$, with matrix units $\{e_{xy}:x,y\in X\}$ characterized by
\begin{align*}
e_{xy}e_{uv}&=\delta_{yu}e_{xv}\\
e_{xy}^*&=e_{yx},
\end{align*}
where $\delta_{yu}$ is the Kronecker delta.

In the following lemma we use the \emph{inflated coaction} $\delta_H$,
and we recall its definition in this special case:
since $H$ has finite index in $G$,
we can regard $C^*(H)$ as a $C^*$-subalgebra of $C^*(G)$,
and $\infl\delta_H$ is defined by the commutative diagram
\[
\xymatrix{
C^*(H) \ar[r]^-{\delta_H} \ar[dr]_{\infl\delta_H}
&M(C^*(H)\otimes C^*(H)) \ar[d]
\\
&M(C^*(H)\otimes C^*(G)),
}
\]
where the vertical arrow is the identity on $C^*(H)$
tensored with the inclusion map
$C^*(H)\hookrightarrow C^*(G)$.

\begin{lem}\label{U}
Define
\[
U=\sum_{x\in X}(e_{xx}\otimes 1\otimes \eta_x)
\in M\bigl(M_X\otimes C^*(H)\otimes C^*(G)\bigr).
\]
Then
$U$ is a cocycle for the coaction $\id\otimes \infl\delta_H$ of $G$.
\end{lem}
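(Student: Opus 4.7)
The plan is to verify directly the two defining conditions for $U$ to be a cocycle for the coaction $\id\otimes\infl\delta_H$: that $U$ is a unitary in $M(M_X\otimes C^*(H)\otimes C^*(G))$, and that $U$ satisfies the coaction cocycle identity
\[
(U\otimes 1)\bigl((\id\otimes\infl\delta_H)\otimes\id\bigr)(U)
=(\id\otimes\id\otimes\delta_G)(U)
\]
in $M(M_X\otimes C^*(H)\otimes C^*(G)\otimes C^*(G))$. Both checks should reduce to short algebraic computations, made easy by the simple ``block-diagonal'' form of $U$ indexed by the finite set $X$ and by the fact that $\infl\delta_H$ is trivial on the unit $1\in M(C^*(H))$ occupying the middle factor.

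First I would establish unitarity. Each $\eta_x\in G$ is a unitary in $M(C^*(G))$, and $\{e_{xx}\}_{x\in X}$ is an orthogonal family of projections in $M_X$ with $\sum_x e_{xx}=1_{M_X}$. Taking $U^*=\sum_{x}e_{xx}\otimes 1\otimes\eta_x\inv$ and multiplying out, the cross terms vanish by orthogonality of the $e_{xx}$'s, giving $UU^*=U^*U=1\otimes 1\otimes 1$.

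Next I would verify the cocycle identity by computing both sides. Since $\infl\delta_H(1)=1\otimes 1$ in $M(C^*(H)\otimes C^*(G))$, one has
\[
(\id\otimes\infl\delta_H\otimes\id)(U)=\sum_{x}e_{xx}\otimes 1\otimes 1\otimes\eta_x.
\]
Multiplying on the left by $U\otimes 1=\sum_y e_{yy}\otimes 1\otimes\eta_y\otimes 1$ and again using orthogonality of the $e_{xx}$'s collapses the double sum to $\sum_{x}e_{xx}\otimes 1\otimes\eta_x\otimes\eta_x$. On the other hand, $\delta_G(\eta_x)=\eta_x\otimes\eta_x$ since each $\eta_x$ is a group element, so
\[
(\id\otimes\id\otimes\delta_G)(U)=\sum_x e_{xx}\otimes 1\otimes\eta_x\otimes\eta_x,
\]
matching the left-hand side.

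I do not anticipate a serious obstacle; the content is essentially a direct calculation. The one thing to be careful with is bookkeeping across the four tensor factors: it is worth recording explicitly which slot is being acted on by $\infl\delta_H$ versus $\delta_G$, since a misalignment there would spoil the identity. If the ambient notion of coaction cocycle being used also demands that $U$ sit in the tilde multiplier algebra or satisfy a strict-continuity condition, these should follow easily from $U$ being a finite sum of elementary tensors involving a single group-element unitary in $M(C^*(G))$.
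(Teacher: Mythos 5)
Your verification of the main cocycle identity is exactly the paper's computation: both sides collapse to $\sum_x e_{xx}\otimes 1\otimes\eta_x\otimes\eta_x$ using orthogonality of the $e_{xx}$ and $\delta_G(\eta_x)=\eta_x\otimes\eta_x$, and your explicit unitarity check (which the paper takes as read) is fine. The one real issue is your final hedge. A $\delta$-cocycle has a second defining axiom beyond the identity $(U\otimes 1)(\delta\otimes\id)(U)=(\id\otimes\delta_G)(U)$, and it is not, as you suggest, that $U$ lie in a tilde multiplier algebra or be strictly continuous; it is that the perturbed map $\ad U\circ\delta$ again be a coaction, concretely that
\[
\ad U\bigl((\id\otimes\infl\delta_H)(a)\bigr)(1\otimes 1\otimes C^*(G))\subset M_X\otimes C^*(H)\otimes C^*(G)
\quad\text{for }a\in M_X\otimes C^*(H).
\]
This cannot be waved away as a property of $U$ alone, and the paper devotes the second half of its proof to it.

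Fortunately the check is in the same spirit as your other computations. For $a=e_{xy}\otimes c$ and $d\in C^*(G)$, conjugating by $U$ collapses the double sum to
\[
\ad U\bigl(e_{xy}\otimes\infl\delta_H(c)\bigr)(1\otimes 1\otimes d)
=e_{xy}\otimes(1\otimes\eta_x)\,\infl\delta_H(c)\,(1\otimes\eta_y\inv d),
\]
and since $\infl\delta_H(c)(1\otimes C^*(G))\subset C^*(H)\otimes C^*(G)$ (as $\infl\delta_H$ is a coaction) while left multiplication by $1\otimes\eta_x$ preserves $C^*(H)\otimes C^*(G)$, the result lies in $M_X\otimes C^*(H)\otimes C^*(G)$ as required. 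With that paragraph added, your argument coincides with the paper's.
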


\begin{proof}
First,
\begin{align*}
(\id\otimes \delta_G)(U)
&=\sum_{x\in X}
(e_{xx}\otimes 1\otimes \eta_x\otimes \eta_x),
\end{align*}
while
\begin{align*}
&(U\otimes 1)
\bigl((\id\otimes \infl\delta_H)\otimes\id\bigr)(U)
\\&\quad=\sum_{x,y\in X}
(e_{xx}\otimes 1\otimes \eta_x\otimes 1)
(e_{yy}\otimes 1\otimes 1\otimes \eta_y)
\\&\quad=\sum_{x,y\in X}
(e_{xx}e_{yy}\otimes 1\otimes \eta_x\otimes \eta_y)
\\&\quad=\sum_{x\in X}
(e_{xx}\otimes 1\otimes \eta_x\otimes \eta_x).
\end{align*}
For the other axiom of cocycles,
let $x,y\in X$, $c\in C^*(H)$, and $d\in C^*(G)$.
Then
\begin{align*}
&\ad U\circ (\id\otimes\infl\delta_H)(e_{xy}\otimes c)(1\otimes 1\otimes d)
\\&\quad=\ad U\bigl(e_{xy}\otimes \delta_H(c)\bigr)(1\otimes 1\otimes d)
\\&\quad=\sum_{u,v\in X}
(e_{uu}\otimes 1\otimes \eta_u)
(e_{xy}\otimes \delta_H(c)
(e_{vv}\otimes 1\otimes \eta_v\inv)(1\otimes 1\otimes d)
\\&\quad=e_{xy}\otimes (1\otimes \eta_x)\delta_H(c)(1\otimes \eta_y\inv)(1\otimes d)
\\&\quad\in e_{xy}\otimes (1\otimes \eta_x)\delta_H(c)(1\otimes C^*(G))
\\&\quad\subset e_{xy}\otimes (1\otimes \eta_x)(C^*(H)\otimes C^*(G))
\\&\quad\subset e_{xy}\otimes (C^*(H)\otimes C^*(G))
\\&\quad\subset M_X\otimes C^*(H)\otimes C^*(G).
\end{align*}
We have shown that $U$ is an $(\id\otimes \infl\delta_H)$-cocycle.
\end{proof}

Part of the
following
(not involving the coaction)
is a very special case of a theorem of Green
\cite[Corollary~2.10]{gre:structure},
but since our situation is so elementary we give the proof.

\begin{lem}\label{special green}
With the above notation,
define
$\pi\:C(X)\to M(M_X\otimes C^*(H))$ by
\begin{equation}\label{pi}
\pi(f)=\sum_{x\in X}f(x)e_{xx}\otimes 1
\end{equation}
and
$V\:G\to M(M_X\otimes C^*(H))$ by
\begin{equation}\label{V}
V_s=\sum_{x\in X}\bigl(e_{sx,x}\otimes \varphi(s,x)\bigr).
\end{equation}
Then $(\pi,V)$ is a covariant homomorphism of the action $(C(X),\alpha)$,
and the integrated form is an isomorphism
\[
\theta=\pi\times V\:C(X)\rtimes_\alpha G
\variso M_X\otimes C^*(H).
\]
Moreover, the isomorphism $\theta$ transports the dual coaction $\what\alpha$ to the coaction $\delta$ of $G$ on $M_X\otimes C^*(H)$ given
by
\[
\ad U\circ (\id\otimes \infl\delta_H),
\]
\end{lem}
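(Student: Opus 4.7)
The plan is to verify the lemma by direct computation, handling covariance, the isomorphism statement for $\theta=\pi\times V$ onto $M_X\otimes C^*(H)$, and the coaction compatibility in that order. Injectivity of $\theta$ will be the main obstacle.

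First, I would verify that $(\pi,V)$ is a covariant homomorphism of $(C(X),\alpha)$ by direct computation with matrix units. Orthogonality of the $e_{xx}$ makes $\pi$ a $*$-homomorphism into $M(M_X\otimes C^*(H))$; the cocycle identity $\varphi(st,x)=\varphi(s,tx)\varphi(t,x)$ together with $e_{sx,x}e_{ty,y}=\delta_{x,ty}e_{sx,y}$ gives $V_sV_t=V_{st}$ and $V_s^*V_s=1=V_sV_s^*$; and a short computation yields
\[
V_s\pi(f)V_s^*=\sum_{x\in X}f(x)\bigl(e_{sx,sx}\otimes 1\bigr)=\pi(\alpha_sf).
\]
Strict continuity of $V$ is automatic since $H$ is open in $G$ (having finite index) and thus $s\mapsto\varphi(s,x)\in H$ is continuous, so $\theta=\pi\times V$ exists as a $*$-homomorphism on the full crossed product. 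The formula
\[
\pi(f)V_s=\sum_{x\in X}f(sx)\bigl(e_{sx,x}\otimes\varphi(s,x)\bigr)
\]
shows $\theta$ takes values in $M_X\otimes C^*(H)$, since each summand is a matrix unit tensored with an element of $H\subset C^*(H)$. For surjectivity, $V_s\pi(\chi_{\{v\}})=e_{sv,v}\otimes\varphi(s,v)$, and the choice $s=\eta_uh\eta_v\inv$ (giving $sv=u$ and $\varphi(s,v)=h$) shows the image contains every $e_{uv}\otimes h$; since these span a dense subspace of $M_X\otimes C^*(H)$ and the image of a $*$-homomorphism between $C^*$-algebras is closed, $\theta$ is surjective.

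The main obstacle is injectivity. My preferred approach is a corner/Morita argument: the projection $p=\chi_{\{eH\}}\in C(X)$ is full in $C(X)\rtimes_\alpha G$, since its $\alpha$-orbit consists of the minimal projections $\chi_{\{x\}}$ summing to $1$; only $s\in H$ fix $eH\in X$, so the corner $p(C(X)\rtimes_\alpha G)p$ is naturally isomorphic to $C^*(H)$. Because $|X|<\infty$ and the projections $\chi_{\{x\}}$ are pairwise Murray-von Neumann equivalent in $M(C(X)\rtimes_\alpha G)$ through the partial isometries $i_G(\eta_x)p$, the full crossed product is $*$-isomorphic to $M_X\otimes C^*(H)$, and one checks that the isomorphism so produced sends generators exactly to the formulas for $\pi$ and $V$ in the lemma, giving $\theta$ as an isomorphism. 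Alternatively, one can define $\psi\:M_X\otimes C^*(H)\to C(X)\rtimes_\alpha G$ on generators by $\psi(e_{uv}\otimes h)=i_G(\eta_uh\eta_v\inv)i_{C(X)}(\chi_{\{v\}})$ and verify directly from the covariance relations that $\psi\circ\theta$ and $\theta\circ\psi$ are the respective identities.

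Finally, to transport the dual coaction I would check $\delta\circ\theta=(\theta\otimes\id)\circ\what\alpha$ on generators, where $\delta=\ad U\circ(\id\otimes\infl\delta_H)$. For $f\in C(X)$ we have $\what\alpha(\pi(f))=\pi(f)\otimes 1$, while $(\id\otimes\infl\delta_H)(e_{xx}\otimes 1)=e_{xx}\otimes 1\otimes 1$, and conjugation by $U=\sum_ye_{yy}\otimes 1\otimes\eta_y$ leaves this invariant since the $\eta_y$ and $\eta_z\inv$ factors collapse when $y=z=x$. For the unitary generators, $\what\alpha(i_G(s))=i_G(s)\otimes s$, while
\[
(\id\otimes\infl\delta_H)(V_s)=\sum_{x\in X}e_{sx,x}\otimes\varphi(s,x)\otimes\varphi(s,x),
\]
and the key collapse
\[
\eta_{sx}\varphi(s,x)\eta_x\inv=\eta_{sx}\bigl(\eta_{sx}\inv s\eta_x\bigr)\eta_x\inv=s
\]
replaces the last factor in each summand, yielding $V_s\otimes s$. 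These identities establish the coaction compatibility, completing the plan.
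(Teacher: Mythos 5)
Your proposal is correct in substance but takes a different route to the isomorphism than the paper does. The paper realizes both $C(X)\rtimes_\alpha G$ and $M_X\otimes C^*(H)$ as groupoid $C^*$-algebras (of the transformation groupoid $G\times X$ and the product groupoid $X^2\times H$, respectively) and obtains $\theta$ from the explicit groupoid isomorphism $\rho(s,x)=(sx,x,\varphi(s,x))$; you instead argue via the full projection $p=\chi_{\{eH\}}$, the corner $p(C(X)\rtimes_\alpha G)p\cong C^*(H)$, and the Murray--von Neumann equivalences implemented by $i_G(\eta_x)p$ (or, alternatively, by writing down the inverse $\psi$ on generators). Both work; the paper's version outsources the isomorphism to groupoid folklore (and cites Green), while yours is self-contained but must justify that the corner carries the \emph{full} norm of $C^*(H)$ --- i.e., that every representation of $H$ arises by restricting an induced covariant representation --- which is exactly the content of the Green/Mackey imprimitivity step you are implicitly using. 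Your verification of covariance and of $\delta\circ\theta=(\theta\otimes\id)\circ\what\alpha$ on the generators $i_{C(X)}(f)$ and $i_G(s)$, using the collapse $\eta_{sx}\varphi(s,x)\eta_x\inv=s$, is essentially the paper's computation read in the other direction.

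One imprecision to repair: since $H$ has finite index it is open in $G$, but it need not be discrete, so a group element $h\in H$ lies in $M(C^*(H))$, not in $C^*(H)$. Hence $e_{sv,v}\otimes\varphi(s,v)$ and $e_{uv}\otimes h$ are multipliers, and your statements that ``each summand is a matrix unit tensored with an element of $H\subset C^*(H)$'' and that the image of $\theta$ ``contains every $e_{uv}\otimes h$'' are not literally meaningful. The fix is standard and is what the paper does implicitly: work with $\int_G z(s)\,\pi(f)V_s\,ds$ for $z\in C_c(G)$, observe that the set $\{s:sv=u\}$ is the open coset $\eta_uH\eta_v\inv$ on which $\varphi(\cdot,v)$ is a homeomorphism onto $H$, and conclude that $\theta$ maps onto $e_{uv}\otimes C^*(H)$ after integration. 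With that adjustment your argument goes through.
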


\begin{proof}
Both $C(X)\rtimes_\alpha G$ and $M_X\otimes C^*(H)$ are groupoid $C^*$-algebras:
for the first we use the transformation groupoid $G\times X$ with multiplication
\[
(s,tx)(t,x)=(st,x),
\]
and whose unit space $\{e\}\times X$ we identify with $X$,
and for the second algebra we use the product groupoid $X^2\times H$,
where $X^2$ denotes the full equivalence relation on $X$.
It is folklore that these groupoids are isomorphic,
and we recall how this goes:
recall that we chose 
a cross section $x\mapsto \eta_x$ from $X$ to $G$,
which determined a cocycle
$\varphi\:G\times X\to H$ by
\[
\varphi(s,x)=\eta_{sx}\inv s\eta_x.
\]
This in turn
leads to a groupoid isomorphism
\[
\rho\:G\times X\variso X^2\times H
\]
via
\[
\rho(s,x)=\bigl(sx,x,\varphi(s,x)\bigr),
\]
with inverse given by
\[
\rho\inv(x,y,h)=\bigl(\eta_xh\eta_y\inv,y),
\]
and moreover
$\rho$ is a homeomorphism since $X$ is discrete.
Then $\rho$ determines an isomorphism
\[
\theta\:C(X)\rtimes_\alpha G\variso M_X\otimes C^*(H)
\]
between the groupoid $C^*$-algebras,
given by the integrated form of the 
covariant homomorphism
$(\pi,V)$ defined in \eqref{pi} and \eqref{V}.

The isomorphism $\theta$
transports the dual coaction $\what\alpha$ to a coaction $\delta$.
To compute $\delta$, we first consider an elementary tensor
\[
e_{xy}\otimes h\in M_X\otimes M(C^*(H))
=M\bigl(M_X\otimes C^*(H)\bigr)
\]
for $x,y\in X,h\in H$:
\begin{align*}
\delta(e_{xy}\otimes h)
&=(\theta\otimes\id)\circ\what\alpha\circ\theta\inv(x,y,h)
\\&=(\theta\otimes\id)\circ\what\alpha(\eta_xh\eta_y\inv,y)
\\&=(\theta\otimes\id)\circ\what\alpha\bigl(i_G(\eta_xh\eta_y\inv)i_{C(X)}(\Chi_y)\bigr)
\\&=(\theta\otimes\id)\bigl(i_G(\eta_xh\eta_y\inv)i_{C(X)}(\Chi_y)\otimes \eta_xh\eta_y\inv\bigr)
\\&=(x,y,h)\otimes \eta_xh\eta_y\inv
\\&=e_{xy}\otimes h\otimes \eta_xh\eta_y\inv.
\end{align*}
Then for $f\in C_c(H)$ we have
\begin{align*}
\delta(e_{xy}\otimes f)
&=\int_H f(h)\delta(e_{xy}\otimes h)\,dh
\\&=\int_H f(h)(e_{xy}\otimes h\otimes \eta_xh\eta_y\inv)\,dh
\\&=e_{xy}\otimes \int_H f(h)(h\otimes \eta_xh\eta_y\inv)\,dh
\\&=e_{xy}\otimes (1\otimes \eta_x)\int_H f(h)(h\otimes h)\,dh(1\otimes \eta_y\inv)
\\&=e_{xy}\otimes (1\otimes \eta_x)\infl\delta_H(f)(1\otimes \eta_y\inv).
\end{align*}
On the other hand,
\begin{align*}
&\ad U\circ (\id\otimes \infl\delta_H)(e_{xy}\otimes f)
\\&\quad=U\bigl(e_{xy}\otimes \infl\delta_H(f)\bigr)U^*
\\&\quad=\sum_{u,v\in X}(e_{uu}\otimes 1\otimes \eta_u)
\bigl(e_{xy}\otimes \infl\delta_H(f)\bigr)
(e_{vv}\otimes 1\otimes \eta_v\inv)
\\&\quad=e_{xy}\otimes (1\otimes \eta_x)\infl\delta_H(f)(1\otimes \eta_y\inv).
\end{align*}
Thus by density and continuity we have
\[
\delta=\ad U\circ (\id\otimes \infl\delta_H).
\qedhere
\]
\end{proof}

\begin{thm}\label{no-go orbit}
Let $G$ act transitively on a finite set $X$.
Then the associated action $\alpha$ of $G$ on $C(X)$
is strongly Pedersen rigid.
\end{thm}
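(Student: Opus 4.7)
The plan is to apply \thmref{sufficient} to the dual coaction $(C(X)\rtimes_\alpha G,\what\alpha)$ with the canonical equivariant homomorphism $V=i_G$. By categorical Landstad duality, the generalized fixed-point algebra of $(C(X)\rtimes_\alpha G,\what\alpha,i_G)$ is $i_{C(X)}(C(X))\cong C(X)$, which is finite-dimensional. Consequently condition (2) of \thmref{sufficient} is automatic, since on any finite-dimensional subspace of a multiplier algebra the norm and strict topologies coincide. The whole content therefore lies in verifying condition (1):
\[
i_{C(X)}(C(X))=M(C(X)\rtimes_\alpha G)_e^{\what\alpha}.
\]

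To prove this, I would transport the question through the isomorphism $\theta\:C(X)\rtimes_\alpha G\variso M_X\otimes C^*(H)$ of \lemref{special green}, which carries $i_{C(X)}(C(X))$ onto $\pi(C(X))=\{\sum_{x} f(x)e_{xx}\otimes 1:f\in C(X)\}$ and transports $\what\alpha$ to $\delta=\ad U\circ(\id\otimes\infl\delta_H)$. Since $M_X$ is finite-dimensional, $M(M_X\otimes C^*(H))=M_X\otimes M(C^*(H))$, so any $m\in M(M_X\otimes C^*(H))$ has a unique decomposition $m=\sum_{x,y\in X}e_{xy}\otimes m_{xy}$ with $m_{xy}\in M(C^*(H))$. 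Using the explicit form of $U$ from \lemref{U} together with the definition of $\infl\delta_H$, a direct computation shows that $\delta(m)=m\otimes 1$ reduces entry-by-entry to the spectral constraint
\[
\infl\delta_H(m_{xy})=m_{xy}\otimes\eta_x\inv\eta_y
\]
in $M(C^*(H)\otimes C^*(G))$.

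Next I would observe that if $m_{xy}\ne 0$ then the right-hand side must land in $M(C^*(H)\otimes C^*(H))$, which forces $\eta_x\inv\eta_y\in H$; because $\{\eta_x\}$ is a transversal of $H$ in $G$, this means $x=y$ and $\eta_x\inv\eta_y=e$. For $x=y$ the constraint becomes $\delta_H(m_{xx})=m_{xx}\otimes 1$, i.e., $m_{xx}\in M(C^*(H))_e$. The symmetry argument from the proof of \corref{symmetric}, applied with $G$ replaced by $H$, then gives $M(C^*(H))_e=\C 1$, so each $m_{xx}$ is a scalar, and hence $m\in\pi(C(X))$, as desired.

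I expect the main obstacle to be the spectral-subspace computation in the previous paragraph, and in particular the bookkeeping with the two tensor factors of $C^*(H)$ and $C^*(G)$ appearing in $\infl\delta_H$ and in the cocycle $U$; once this is settled, the theorem follows immediately from \thmref{sufficient} together with the fact that strong fixed-point rigidity is preserved by conjugacy of coactions.
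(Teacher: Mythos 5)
Your proposal is correct, but it is organized differently from the paper's proof, and the comparison is worth recording. The paper first uses \lemref{special green} to replace $\what\alpha$ by $\ad U\circ(\id\otimes\infl\delta_H)$ and then strips off the cocycle $U$, invoking \propref{id tensor} so that the only fixed-point computation required is $M(C^*(H))_e=\C 1$ for the \emph{untwisted} coaction $\id\otimes\infl\delta_H$ (whose distinguished fixed-point algebra is $M_X\otimes\C 1$ rather than the diagonal); the off-diagonal entries never have to be confronted. You instead apply \thmref{sufficient} directly to $(C(X)\rtimes_\alpha G,\what\alpha,i_G)$ and compute $M(M_X\otimes C^*(H))_e$ for the twisted coaction itself, which correctly yields the entrywise condition $\infl\delta_H(m_{xy})=m_{xy}\otimes\eta_x\inv\eta_y$ and forces the fixed points onto the diagonal $\pi(C(X))$. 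Your route is more self-contained---in particular it avoids relying on the (implicitly used) fact that strong fixed-point rigidity passes across exterior equivalence of coactions---at the price of the off-diagonal analysis. That step is right but should be made precise: the cleanest way to see that $m_{xy}=0$ when $\eta_x\inv\eta_y\notin H$ is to slice by $f=\Chi_{\eta_x\inv\eta_y H}$, which lies in $B(G)$ because $H$ is open of finite index; then
\[
(\id\otimes f)\bigl(\infl\delta_H(m_{xy})\bigr)=(\id\otimes f|_H)\bigl(\delta_H(m_{xy})\bigr)=0,
\midtext{while}
(\id\otimes f)(m_{xy}\otimes\eta_x\inv\eta_y)=f(\eta_x\inv\eta_y)\,m_{xy}=m_{xy}.
\]
Your treatment of the diagonal entries---passing from $\infl\delta_H(m_{xx})=m_{xx}\otimes 1$ to $\delta_H(m_{xx})=m_{xx}\otimes 1$ via injectivity of the inflation on multipliers and then applying the symmetry argument of \corref{symmetric} to $H$---coincides with what the paper does.
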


\begin{proof}
We will prove the equivalent statement that
the dual coaction $\what\alpha$
is strongly fixed-point rigid,
Now, by \lemref{special green},
$\what\alpha$ is exterior equivalent to the coaction
$\id\otimes\infl\delta_H$ on
$M_X\otimes C^*(H)$.
Since $M_X$ is finite-dimensional,
by \propref{id tensor}
it now suffices to prove that $\infl\delta_H$ is 
strongly fixed-point rigid.
As in the proof of
\corref{symmetric},
we only need to show that
\[
M(C^*(H))_e\subset \C 1_{M(C^*(H))},
\]
where we mean the fixed points in $M(C^*(H))$
relative to the coaction $\infl\delta_H$ of $G$.
So, let $m\in M(C^*(H))_e$,
so that $\infl\delta_H(m)=m\otimes 1_{M(C^*(G))}$.
But then $\infl\delta_H(m)$
must coincide with
the image of
$m\otimes 1_{M(C^*(H))}$
in
$M(C^*(H)\otimes C^*(G))$,
so \corref{symmetric} implies that
$m\in \C 1_{M(C^*(H))}$,
as desired.
\end{proof}

\begin{cor}\label{orbits}
Let $G$ act on a finite set $X$,
and let $\alpha$ be the associated action of $G$ on $C(X)$.
Then the dual coaction $(C(X)\rtimes_\alpha G,\what\alpha)$
is strongly fixed-point rigid.
\end{cor}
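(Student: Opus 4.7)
The plan is to reduce the general case to the transitive case handled by \thmref{no-go orbit}, using the direct-sum result \corref{no-go bundle} (equivalently, the local rigidity theorem \thmref{no-go local}). Since the dual coaction being strongly fixed-point rigid is the same as the underlying action being strongly Pedersen rigid, it suffices to show that $\alpha$ is strongly Pedersen rigid.

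First I would decompose $X$ into its $G$-orbits: since $X$ is finite, we can write $X=X_1\sqcup\cdots\sqcup X_n$ as a disjoint union of finitely many $G$-invariant subsets on each of which $G$ acts transitively. Correspondingly, $C(X)=\bigoplus_{i=1}^n C(X_i)$ as $C^*$-algebras, and each summand $C(X_i)$ is an $\alpha$-invariant ideal of $C(X)$ on which $\alpha$ restricts to the action $\alpha_i$ of $G$ on $C(X_i)$ associated with the transitive $G$-action on the finite set $X_i$.

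By \thmref{no-go orbit}, each $(C(X_i),\alpha_i)$ is strongly Pedersen rigid. Since $(C(X),\alpha)=\bigoplus_{i=1}^n (C(X_i),\alpha_i)$ is a (finite) direct sum of strongly Pedersen rigid actions, \corref{no-go bundle} then gives that $(C(X),\alpha)$ is itself strongly Pedersen rigid, and hence the dual coaction $(C(X)\rtimes_\alpha G,\what\alpha)$ is strongly fixed-point rigid, as desired.

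There is no real obstacle here; the only point that needs a line of care is identifying $C(X)=\bigoplus_i C(X_i)$ as a decomposition into $\alpha$-invariant ideals so that \corref{no-go bundle} actually applies, but this is immediate since $X$ is discrete and the orbits are clopen.
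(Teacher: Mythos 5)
Your proof is correct and follows exactly the paper's own argument: decompose $X$ into orbits, apply \thmref{no-go orbit} to each transitive piece, and conclude via the direct-sum result \corref{no-go bundle}. Nothing further is needed.
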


\begin{proof}
Since $X$ is a disjoint union of orbits,
$\alpha$ is a finite direct sum of transitive actions on finite sets,
so the corollary follows from \thmref{no-go orbit} and \corref{no-go bundle}.
\end{proof}

We believe that all
actions on finite-dimensional $C^*$-algebras are strongly Pedersen rigid.
However, there is a subtlety that has prevented us from proving a no-go theorem in that generality,
and we explain here:
every action on a finite-dimensional $C^*$-algebra is a direct sum of actions that are transitive on the primitive ideal spaces.
Suppose that
$A$ is finite-dimensional and
$\alpha\:G\curvearrowright A$ is transitive on $X=\prim A$.
Then up to isomorphism
\[
A=M_n\otimes C(X)=\bigoplus_{x\in X}M_n.
\]
Any automorphism of $A$ can be expressed as
a permutation of the copies of $M_n$
followed by an unitary automorphism.
More precisely, for each $s\in G$ we have
\[
\alpha_s=\ad U_s\circ \beta_s,
\]
where
$U_s=(U_s^x)_{x\in X}$
is a tuple of unitary matrices
and
$\beta_s$ just permutes the coordinates in the direct sum of matrices.
The obstruction to $U$ being a $\beta$-cocycle
is a circle-valued two-cocycle $\tau$ on $G$,
which we call the \emph{Mackey obstruction} of the action $\alpha$.

\begin{thm}\label{fin dim}
Let $\alpha$ be an action of $G$ on a finite-dimensional $C^*$-algebra $A$. If all the Mackey obstructions
discussed above
vanish,
then $(A,\alpha)$ is strongly Pedersen rigid.
\end{thm}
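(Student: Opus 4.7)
The plan is to reduce to the transitive-on-$\prim$ case handled by \thmref{no-go orbit} by combining \corref{no-go bundle} with the vanishing Mackey obstruction. Writing $A = \bigoplus_i A_i$ with each $(A_i,\alpha|_{A_i})$ transitive on $\prim A_i$, the Mackey obstruction is defined summand by summand, so the hypothesis descends, and by \corref{no-go bundle} it suffices to handle a single transitive summand. So assume $A = M_n\otimes C(X)$ with $\alpha_s = \ad U_s\circ\beta_s$, $\beta_s = \id_{M_n}\otimes\gamma_s$, and $\gamma$ the permutation action of $G$ on $C(X)$.

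Since the Mackey obstruction $\tau$ is a coboundary, choose $\lambda\colon G\to\T$ so that the modified family $\tilde U_s := \lambda_s U_s$ satisfies $\tilde U_{st} = \tilde U_s\beta_s(\tilde U_t)$. Because $\lambda_s$ is central and scalar, $\ad\tilde U_s = \ad U_s$, whence $\alpha_s = \ad\tilde U_s\circ\beta_s$, exhibiting $\alpha$ as exterior equivalent to $\beta := \id_{M_n}\otimes\gamma$. Since strong Pedersen rigidity is preserved under outer conjugacy, we have reduced the problem to proving $\beta$ is strongly Pedersen rigid.

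For this we mimic the proof of \thmref{no-go orbit}. The dual coaction is $\what\beta = \id_{M_n}\otimes\what\gamma$ on $M_n\otimes(C(X)\rtimes_\gamma G)$. By \lemref{special green}, $\what\gamma$ is exterior equivalent to $\id\otimes\infl\delta_H$ on $M_X\otimes C^*(H)$, with $H$ a point stabilizer, so tensoring with $\id_{M_n}$ exhibits $\what\beta$ as outer conjugate to $\id_{M_n\otimes M_X}\otimes\infl\delta_H$ on $M_n\otimes M_X\otimes C^*(H)$. Since $M_n\otimes M_X$ is finite-dimensional, the coaction analog of \propref{id tensor} that already powers \thmref{no-go orbit} reduces strong fixed-point rigidity of $\what\beta$ to that of $\infl\delta_H$, which in turn follows from $M(C^*(H))_e = \C\,1_{M(C^*(H))}$ exactly as in \corref{symmetric}.

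The main obstacle is the continuity bookkeeping in the second step: one has to arrange that $\lambda$ can be chosen so that $\tilde U\colon G\to M(A)$ is strictly continuous and hence a genuine $\beta$-cocycle in the sense of the Preliminaries, since only then does exterior equivalence of $\alpha$ and $\beta$ make sense. For second-countable $G$ this is standard in the Mackey-obstruction theory, but the phrase ``Mackey obstruction vanishes'' should be interpreted as vanishing in continuous (not merely Borel) cohomology; once that is spelled out, the remaining steps are formal given the existing machinery.
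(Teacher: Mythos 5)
Your proof is correct and follows essentially the same route as the paper: reduce to a single transitive summand via \corref{no-go bundle}, use the vanishing of the Mackey obstruction to replace $\alpha$ by the exterior-equivalent action $\beta=\id_{M_n}\otimes\gamma$, and conclude that $\beta$ is strongly Pedersen rigid via \propref{id tensor} together with the \lemref{special green}/\corref{symmetric} machinery. The paper's own proof is terser --- it simply cites \propref{id tensor} for $\beta$ --- so your explicit verification of its hypotheses through the argument of \thmref{no-go orbit}, and your remark that the scalar correction $\lambda$ must be chosen continuously for $\tilde U$ to be a genuine $\beta$-cocycle, only spell out what the paper leaves implicit.
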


\begin{proof}
We continue to use the notation in the discussion preceding the theorem.
By hypothesis,
we can choose the unitaries $U_s$ so that $U$ is a $\beta$-cocycle.
Thus $\alpha$ is exterior equivalent to $\beta$, and
\propref{id tensor} tells us that $\beta$ is strongly Pedersen rigid,
so
$\alpha$ is also strongly Pedersen rigid.
\end{proof}

\begin{cor}\label{no-go unitary}
Every unitary action of $G$ on a finite-dimensional $C^*$-algebra $A$ is strongly Pedersen rigid.
\end{cor}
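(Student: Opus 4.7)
The plan is to apply \thmref{fin dim}, so I must verify that a unitary action $\alpha$ on a finite-dimensional $C^*$-algebra $A$ has vanishing Mackey obstructions on every $G$-orbit in $\prim A$.

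First, I would unwind the definition of a unitary action: write $\alpha=\ad u$ for a strictly continuous unitary homomorphism $u\colon G\to M(A)$, and recall that $M(A)=A$ since $A$ is finite-dimensional. Using the block decomposition $A=\bigoplus_{x\in\prim A}M_{n_x}$, every unitary element of $A$ is a tuple $(u^{(x)})_{x\in\prim A}$ of unitaries, one in each block. Conjugation by such an element preserves each block, so $\alpha$ acts trivially on $\prim A$. In particular, every $G$-orbit in $\prim A$ is a singleton.

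Second, I would apply the decomposition from the paragraph preceding \thmref{fin dim}. On each (one-point) orbit $\{x\}$ we have $X=\{x\}$ and the underlying algebra is $M_{n_x}$, so the permutation component $\beta_s$ is forced to be the identity, while the unitary component may be taken to be $U_s^x:=u_s^{(x)}$, the $x$-block of $u_s$. The would-be $\beta$-cocycle condition then reads
\[
U_{st}^x=u_{st}^{(x)}=u_s^{(x)}u_t^{(x)}=U_s^x\beta_s(U_t^x),
\]
which holds simply because $u$ is a group homomorphism. Hence the Mackey obstruction $\tau_x$ of \thmref{fin dim} is trivial for every $x\in\prim A$.

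Third, having verified the hypothesis of \thmref{fin dim}, I conclude that $(A,\alpha)$ is strongly Pedersen rigid. The only conceptual point to watch — there being no substantive calculation here — is to notice that triviality of the induced action on $\prim A$ collapses the content of \thmref{fin dim} to the tautology that a group homomorphism is automatically a cocycle for the trivial permutation action on a one-point set.
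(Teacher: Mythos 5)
Your proposal is correct and follows the same route as the paper: both deduce from unitarity that $G$ acts trivially on $\prim A$, observe that the implementing homomorphism $u$ itself serves as the cocycle $U$ (so the Mackey obstructions vanish), and then invoke \thmref{fin dim}. Your write-up merely spells out the verification that the paper's proof leaves implicit.
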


\begin{proof}
Since the action is unitary, $G$ acts trivially on $\prim A$,
and we are assuming that all the Mackey obstructions vanish,
so this is a special case of \thmref{fin dim}.
\end{proof}

\begin{rem}\label{unitary rem}
In \cite[Proposition~4.6]{koqpedersen} we proved that if $G$ is abelian then every unitary action is strongly Pedersen rigid.
This worked in that much generality because if $\alpha$ is the trivial action of an abelian group on $A$, then the homomorphism $i_G^\alpha\:G\to M(A\rtimes_\alpha G)$ maps into the center,
and hence commutes with $i_G^\beta$ for any other action satisfying $A\rtimes_\alpha G=A\rtimes_\beta G$.
For nonabelian $G$, the best we were able to do is 
\corref{no-go unitary},
which imposes severe restrictions on the action.
\end{rem}


\providecommand{\bysame}{\leavevmode\hbox to3em{\hrulefill}\thinspace}
\providecommand{\MR}{\relax\ifhmode\unskip\space\fi MR }
\providecommand{\MRhref}[2]{%
  \href{http://www.ams.org/mathscinet-getitem?mr=#1}{#2}
}
\providecommand{\href}[2]{#2}

\end{document}